\documentclass[12pt]{amsart}

\usepackage{amsmath}
\usepackage{amssymb}

\usepackage{enumitem}
\usepackage{graphicx}

\makeatletter
\@namedef{subjclassname@2020}{%
  \textup{2020} Mathematics Subject Classification}
\makeatother

\usepackage[T1]{fontenc}

\newtheorem{theorem}{Theorem}[section]
\newtheorem{corollary}[theorem]{Corollary}
\newtheorem{lemma}[theorem]{Lemma}

\newtheorem{conjecture}[theorem]{Conjecture}

\theoremstyle{definition}
\newtheorem{definition}[theorem]{Definition}

\numberwithin{equation}{section}

\begin{document}

\baselineskip=17pt

\title[About the Primality of Primorials]{About the Primality of Primorials}

\author[G. Lillie]{George M. Lillie}
\address{Department of Mathematics\\ University of Michigan
\ 
}
\email{lilliege@umich.edu}

\date{}

\begin{abstract}
A primorial prime is a prime number of the form $p_n\# \pm 1$ where $p_n\#$ denotes the product of all primes less than or equal to $p_{n}$, the $n$-th prime. We show that the probability along the lines of Mertens' Theorem that either $p_n\# -1$ or $p_n\# +1$ is prime is $O(n^{-1})$ and that the probability that both $p_n\# -1$ and $p_n\# +1$ are prime is $O(n^{-2})$, for $n>1$. The latter result provides evidence that there are in total three instances where both $p_n\# -1$ and $p_n\# +1$ are prime. We provide proof that numbers of the from $p_n\# \pm 1$ have the highest probability of being prime. 
\end{abstract}

\subjclass[2020]{11A41; 11A51; 11L20; 11N05; 11N80; 11Y11}

\keywords{Primes; Twin Primes; Primorials; Distribution of Primes; Primality}

\maketitle

\section{Introduction}
Let $p$ always be a prime and $p_n$ denote the $n$-th prime, starting with $p_1=2$. We denote by $\pi(x)$, $\pi_2(x)$, and $\vartheta(x)$, respectively, the number of primes $p \leqslant  x$, the number of primes $p\leqslant  x$ for which $p+2$ is also prime, and the logarithm of the product of primes $p \leqslant  x$. Let $k=1,2$ and $c\geqslant  1/2$ both be constants. Consider 
\begin{equation*}
p_n\# \equiv \prod_{i=1}^{n}p_i.
\end{equation*}
In 1987, Dubner \cite{5} defined $p_n\#$ as the primorial function, where term ``primorial'' itself draws from the neologistic analogy: \textit{factors} is to \textit{factorials} as \textit{primes} is to \textit{primorials}. Primorial primes are considered to be prime numbers of the form $p_n\# \pm 1$. In a similar fashion, we specify a primorial twin prime pair as a twin prime pair of the form $(p_n\#-1, p_n\#+1)$. Conjecture \ref{cn1} follows.
\begin{conjecture} \label{cn1}
The total expected number of primorial twin prime pairs is approximately three. 
\end{conjecture}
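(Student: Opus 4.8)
The plan is to read the conjecture as the quantitative heuristic that the expected value
\[
E = \sum_{n \ge 2} P_n, \qquad P_n := \Pr\bigl[\,p_n\#-1 \text{ and } p_n\#+1 \text{ are both prime}\,\bigr],
\]
is close to $3$, where each $P_n$ is the Mertens-type probability that the $n$-th primorial pair is a twin prime pair. First I would reduce the whole statement to estimating the single quantity $P_n$: by linearity of expectation the expected number of primorial twin pairs is exactly the sum over $n$ of the probabilities that a given primorial pair qualifies, so the conjecture rests entirely on (i) a defensible model for $P_n$ and (ii) summing the resulting series.

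For step (i) the key structural input is that $p_n\#$ is divisible by every prime $p \le p_n$, so both $p_n\# \pm 1$ are automatically coprime to all such $p$; this is what makes primorial pairs unusually likely to be prime, and it is presumably the content of the paper's claim that $p_n\# \pm 1$ have the highest probability of being prime. I would model $P_n$ as a product of local densities. The size is governed by $\log(p_n\#) = \vartheta(p_n) \sim p_n \sim n\log n$. For the small primes $p \le p_n$ the survival probability is $1$; for an odd prime $q > p_n$, since $p_n\#-1$ and $p_n\#+1$ occupy two distinct nonzero residue classes modulo $q$, the chance that $q$ divides neither is $1 - 2/q$, which is exactly the twin-prime correlation behind $\pi_2$. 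Applying Mertens' third theorem $\prod_{p \le x}(1-1/p)\sim e^{-\gamma}/\log x$, together with its twin analogue, to the tail product $\prod_{p_n < q \le \sqrt{p_n\#}}(1 - 2/q)$, the factors of $\log p_n$ coming from the small-prime conditioning cancel against the size factors, the twin constant cancels in the ratio, and one is left with $P_n \asymp C\,n^{-2}$ for an explicit constant $C$ of order a few; this is consistent with the abstract's $O(n^{-2})$ bound.

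For step (ii), once $P_n \asymp C n^{-2}$ is in hand the series $\sum_{n\ge 2} P_n$ converges absolutely, so only finitely much mass can lie in the tail: $\sum_{n > N_0} P_n = O(1/N_0)$. I would therefore evaluate the first several terms as carefully as the small-number regime permits — these are precisely where the known pairs at $n = 2,3,5$ occur — and bound the remainder, so that the total lands near $3$ and is in particular consistent with there being no further primorial twin pairs.

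The hard part is that this is a heuristic rather than a theorem, and the delicate point is pinning the constant $C$ sharply enough to distinguish \emph{approximately three} from approximately two or four. Two assumptions do the real work and are not rigorous: that the two primality events are independent enough to multiply local densities (a Hardy–Littlewood-type hypothesis, genuinely open), and that the Mertens sieve probability may be used as a true prime probability — the two differ by the familiar factor $e^{\gamma}/2$, and one must apply this consistently to the singleton and the pair counts so that it cancels cleanly rather than corrupting the final constant. Getting that normalization right, and controlling the small-$n$ terms where the asymptotics for $\vartheta(p_n)$ and for Mertens' theorem are least accurate, is where I expect essentially all of the difficulty to lie.
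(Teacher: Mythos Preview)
Your proposal is correct and follows essentially the same route as the paper: the paper's $L_2(p_n;n)\sim\theta_2(\log p_n/p_n)^2\sim\theta_2 n^{-2}$ plays exactly the role of your $P_n$, and the expected count is taken to be the convergent sum $\theta_2\sum_{p}(\log p/p)^2$, evaluated numerically to about $2.97$. The normalization issue you flag as the hard part is resolved in the paper via a free parameter $c$ in the sieve cutoff $p\le x^c$; the choice $c=1/2$ gives $\theta_2=4$ (the twin constant $\Pi_2$ cancels just as you predicted) and lands the total in the interval $(2.87,3.01)$.
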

We provide evidence in Section 4 that $n=2,3,5$ give the three primorial twin prime pairs in the above conjecture. Definition \ref{df1} expands on the notion of primorials below. 
\begin{definition} \label{df1}
Define a universal primorial to be an odd, positive integer of the form
$$
K p_n\#+g, \hspace{0.5cm} \mbox{$K \in \left\{\frac{N}{2}: N=1,2,...\right\}$},
$$
where
\[ g=
\left\{ \begin{array}{ll}
         \mbox{$\pm 1$} & \mbox{for even $N$};\\
         \mbox{$2$ or $4$} & \mbox{for odd $N$}.\end{array} \right. \]
\end{definition}

Since any positive integer can be written as a universal primorial, Conjecture \ref{up} below is equivalent to the Twin Prime Conjecture.
\begin{conjecture} \label{up}
There are expected to be infinitely many numbers $x=K p_n\#+g$ such that both $x$ and $x+2$ are prime.
\end{conjecture}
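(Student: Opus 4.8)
The plan is to read the final statement not as an assertion to be proved outright---since Conjecture~\ref{up} is declared equivalent to the Twin Prime Conjecture, an unconditional proof is out of reach---but as that equivalence itself. The equivalence rests entirely on the claim foreshadowed just before it: that the universal primorials of Definition~\ref{df1} are precisely the odd positive integers. So I would first establish that set equality by a pair of inclusions, and then transport the Twin Prime Conjecture across the resulting identification of the two sets.

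For the first inclusion I would check that every universal primorial is odd. Writing $p_n\# = 2P$ with $P = p_n\#/2$, note that $P$ is a product of odd primes (or $P=1$ when $n=1$), hence odd; since $K = N/2$, the number is
\[
x = K\,p_n\# + g = NP + g .
\]
When $N$ is even, $NP$ is even and $g = \pm 1$ is odd, so $x$ is odd; when $N$ is odd, $NP$ is odd and $g \in \{2,4\}$ is even, so $x$ is again odd. For the reverse inclusion I must show every odd positive integer is representable. The cleanest route is to take $n=1$, so $P=1$ and $x = N + g$: given odd $x \geqslant 1$, set $N = x+1$ (even) and $g = -1$, yielding a legitimate universal primorial with $K = (x+1)/2$. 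This surjectivity is the substantive step. I would also record that for larger $n$ the four offsets $g \in \{-1,1,2,4\}$ together with the half-integer steps in $K$ represent those $x$ lying near a multiple of $p_n\#$ or near an odd multiple of $p_n\#/2$, though surjectivity onto all odd integers already follows from the $n=1$ case alone.

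With the set equality in hand, the equivalence is immediate. Every twin prime pair $(x,x+2)$ with $x>2$ has $x$ odd, hence $x$ is a universal primorial with $x$ and $x+2$ both prime; conversely any universal primorial $x$ for which $x$ and $x+2$ are prime is the lower member of a twin prime pair. Thus there are infinitely many universal primorials $x$ with $x,x+2$ prime if and only if there are infinitely many twin primes, and the word ``expected'' in the statement should be read as the usual probabilistic phrasing rather than as altering the logical content.

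I expect the main obstacle to be conceptual rather than computational: because the statement is logically equivalent to an open problem, the only honest ``proof'' is of the equivalence, and the genuine work is confined to the elementary but error-prone bookkeeping in Definition~\ref{df1}. In particular, care is needed with the half-integer parametrization $K = N/2$ and the parity-dependent choice of $g$, since it is exactly the coupling of ``$N$ even with $g=\pm 1$'' and ``$N$ odd with $g\in\{2,4\}$'' that forces every value $NP+g$ to be odd while still sweeping out all odd integers. I would verify the representation down to the smallest odd integers and confirm that the offsets $\{2,4\}$, rather than the naive $\{\pm 1\}$, are the correct companions of odd $N$.
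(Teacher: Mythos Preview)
Your proposal is internally correct: the parity bookkeeping is right, and taking $n=1$ does give a clean surjection onto the odd positive integers, establishing the equivalence with the Twin Prime Conjecture that the paper asserts in the sentence preceding Conjecture~\ref{up}. However, this is not what the paper does to ``justify'' the conjecture. In Section~4 the paper takes the word \emph{expected} literally and gives a heuristic count: it uses the approximation
\[
L_2(p_n;n,x)\approxeq \theta_2\left(\frac{\log p_n}{\vartheta(p_n)+\log(N/2)}\right)^2
\]
and then sums over all primes $p$ and all $N\geqslant 1$, bounding the double sum below by a tail of the harmonic series to show that the expected number of universal-primorial twin primes diverges. That Borel--Cantelli-style divergence is the paper's evidence for the conjecture.

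So the two approaches are doing different jobs. Yours rigorously proves the logical equivalence the paper merely states, but it supplies no independent evidence for the conjecture --- it simply hands the problem back to the Twin Prime Conjecture. The paper's argument is non-rigorous but quantitative: it uses the $L_2$ machinery developed in Section~3 to argue that the expected count is infinite, which is the intended payoff of the whole $L_k$ framework. If you want to match the paper, you would need to carry out (or at least sketch) the divergence computation in Section~4 rather than the set-equality reduction.
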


Now, we consider the probability that a given universal primorial is prime. Formally, the probability that a number is prime is either zero or one, but the Prime Number Theorem (PNT) maintains that $\pi(x)\sim x/ \log x$ as $x\to \infty,$ suggesting that the probability a number roughly the value of $x$ is prime is asymptotic to $(\log x)^{-1}$. This result differs from the result of an intuitive approach to determining the primality of a given number by a constant factor (namely, $e^{-\gamma}$, where $\gamma \approx 0.577$ is the Euler–Mascheroni constant), as shown next. 

The probability that $p$ does not divide a reasonably large $x \in \mathbb{N^+}$ is simply $(p-1)/p$. Thus, the probability that no $p \leqslant  x^c$ divides $x$ is given by Eq. \eqref{mer} below in Mertens' so-called third theorem \cite{10}.
\begin{theorem}[\bfseries Mertens]
For $x \in \mathbb{N^+}$, we have 
\begin{equation} \label{mer}
M^*(x^c)\equiv \prod_{p \leqslant  x^c} \frac{p-1}{p}=\frac{e^{-\gamma}}{c \log x}(1+o(1)).
\end{equation}
\end{theorem}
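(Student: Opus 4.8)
The plan is to set $y = x^{c}$, so that $\log y = c\log x$, and to prove the equivalent asymptotic $M^{*}(y) = e^{-\gamma}(\log y)^{-1}(1+o(1))$ as $y \to \infty$; the stated form then follows by substitution. Since $M^{*}(y)$ is a product of positive factors, I would pass to logarithms and expand each factor with the series $\log(1-t) = -\sum_{k\geqslant 1} t^{k}/k$, giving
$$\log M^{*}(y) = \sum_{p\leqslant y}\log\left(1 - \frac 1p\right) = -\sum_{p\leqslant y}\frac 1p - \sum_{p\leqslant y}\sum_{k\geqslant 2}\frac{1}{kp^{k}}.$$
The inner double sum is bounded above by $\sum_{p}\frac{1}{p(p-1)}$, which converges, so as $y\to\infty$ it equals $S + o(1)$, where $S = \sum_{p}\sum_{k\geqslant 2}(kp^{k})^{-1}$ is a finite positive constant and the tail beyond $y$ is $o(1)$.

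For the linear term I would invoke Mertens' second theorem in the sharp form $\sum_{p\leqslant y} 1/p = \log\log y + M + o(1)$, where $M$ denotes the Meissel--Mertens constant. Substituting both estimates yields
$$\log M^{*}(y) = -\log\log y - M - S + o(1),$$
and hence $M^{*}(y) = e^{-(M+S)}(\log y)^{-1}(1+o(1))$. The entire theorem thus reduces to the single constant identity $M + S = \gamma$; since $\sum_{k\geqslant 2}(kp^{k})^{-1} = -\log(1-1/p) - 1/p$, this is equivalent to the familiar evaluation $M = \gamma + \sum_{p}\left[\log(1-1/p) + 1/p\right]$.

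The main obstacle is precisely this constant identification --- showing that the surviving constant is exactly $e^{-\gamma}$, and not merely some unspecified absolute constant. I would establish it through the Riemann zeta function. From the Euler product one has $\log\zeta(s) = \sum_{p}\sum_{k\geqslant 1}(kp^{ks})^{-1}$ for $s>1$, while the Laurent expansion $\zeta(s) = (s-1)^{-1} + \gamma + O(s-1)$ near $s=1$ gives $\log\zeta(s) = -\log(s-1) + o(1)$ as $s\to 1^{+}$. Isolating the $k=1$ contribution $\sum_{p} p^{-s}$, letting $s\to 1^{+}$, and comparing with the partial-sum asymptotic for $\sum_{p\leqslant y} 1/p$ through an Abelian argument pins $M$ to the value $\gamma + \sum_{p}\left[\log(1-1/p)+1/p\right]$, which is exactly the needed identity $M+S=\gamma$. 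Exponentiating the displayed estimate then gives $M^{*}(y) = e^{-\gamma}(\log y)^{-1}(1+o(1))$, and substituting $\log y = c\log x$ completes the proof.
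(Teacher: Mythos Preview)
Your argument is the classical derivation of Mertens' third theorem and is correct in outline: take logarithms, separate the $k=1$ term from the convergent tail $S$, invoke Mertens' second theorem for $\sum_{p\leqslant y}1/p$, and then identify the constant $M+S$ with $\gamma$ via the Laurent expansion of $\zeta(s)$ at $s=1$. The only step that would need more care in a fully written proof is the Abelian passage from the behaviour of $\sum_p p^{-s}$ as $s\to 1^+$ to the constant $M$ in the partial-sum asymptotic; this is standard (one typically uses partial summation together with an explicit estimate such as $\sum_{p\leqslant y}\log p/p = \log y + O(1)$, or a Tauberian theorem), but as written it is only gestured at.

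As for comparison: the paper does not supply its own proof of this theorem. It is stated in the introduction as a known classical result with a citation to Mertens' original 1874 paper, and is then used as input for the author's probability heuristics and for the quantitative bounds in Section~3 (where sharper explicit inequalities of Dusart and Axler replacing the bare $o(1)$ are quoted). So there is nothing in the paper to compare your proof against; you have provided a proof where the paper simply invokes the literature.
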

($M^*(x^c)$ is not to be confused with Mertens' Function $M(n),$ which denotes the sum of the M\"obius function $\mu(z)$ over all integers $1\leqslant  z \leqslant  n$.) We look to make improvements on such intuitive probability calculations for primality by considering the following definition. 
\begin{definition}
For $x \in \mathbb{N^+}$, define the set
\begin{equation*}
\mathcal{S}_{x, b}=\{p : p \mid x-b \mbox{ and } p \nmid b\}, \hspace{0.5cm} b \in \mathbb{N^+}, 1\leqslant  b <x.
\end{equation*}
\end{definition}
The set $\mathcal{S}_{x, b}$ is composed of a certain arrangement of primes that are known to not divide $x$, and, therefore, such primes can be excluded from related probability calculations for primality. Since we are only considering universal primorials, we choose to make the following definition. 
\begin{definition} \label{dpol} Let $x>1$ be a universal primorial and let $$\mathcal{U}=\{K p_n\#+g: \forall n, N \in \mathbb{N^+} \}.$$ Define $L: \mathcal{U} \to \mathbb{R^+}$. We have
\begin{equation*}
L_k(p_n; n, x) \equiv \frac{1}{\prod\limits_{ p \in \mathcal{S}_{x, (b=g)}} \frac{p-k}{p}}\prod_{\substack{p \leqslant  x^c \\ p > k}} \frac{p-k}{p}.
\end{equation*}
\end{definition}
Along the lines of Mertens' Theorem, $L_1(p_n; n) $ denotes the probability that either $p_n\#-1$ or $p_n\#+1$ is prime, and $L_2(p_n; n) $ denotes the probability that both $p_n\#-1$ and $p_n\#+1$ are prime.\footnote{The word ``probability'' is meant to be interpreted as $L_k(\dots)$ throughout the rest of this paper unless specified otherwise.} Theorem \ref{U} follows.

\begin{theorem}\label{U} 
For any primorial $p_n\#$, $n>1$, we have
\begin{equation} \label{pnew} L_k(p_n; n) = O\left(n^{-k} \right).
\end{equation}
In particular, 
\begin{equation} \label{asim}
L_k(p_n; n) \sim \theta_kn^{-k}, \hspace{0.5cm} (15/16)^{k-1}<\theta_k \leqslant  2^{k}.
\end{equation}
\end{theorem}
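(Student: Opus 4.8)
The plan is to first strip $L_k$ down to a clean tail product by identifying the correction set. A universal primorial with $K=1$ has the form $x=p_n\#+g$ with $g=\pm1$, so $x-g=p_n\#$ and hence $\mathcal{S}_{x,g}=\{p:p\mid p_n\#\text{ and }p\nmid g\}=\{p_1,\dots,p_n\}$, since $g=\pm1$ is divisible by no prime. Reading both products with the common restriction $p>k$ (which for $n>1$ is automatic, as then $p_n\geq3>k$, and which removes any division by zero), the correction factor cancels exactly the small-prime part of the Mertens-type product and leaves the reduction
\begin{equation*}
L_k(p_n;n)=\prod_{p_n<p\leq x^c}\frac{p-k}{p},\qquad x\asymp p_n\#.
\end{equation*}
This is the decisive step: it exhibits $L_k$ as a \emph{tail} of the sieve product over primes exceeding $p_n$, which is the origin of the power saving in $n$.

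For $k=1$ I would evaluate this tail directly. Writing $\log L_1=\sum_{p_n<p\leq x^c}\log(1-1/p)$ and invoking Mertens' second theorem in the difference form $\sum_{p_n<p\leq x^c}1/p=\log\log(x^c)-\log\log p_n+o(1)$, together with the convergence of $\sum_p 1/p^2$ (so that $\sum_{p>p_n}O(1/p^2)=o(1)$), gives $L_1\sim\frac{\log p_n}{c\log x}$; equivalently this is the ratio $M^*(x^c)/M^*(p_n)$ of the products in \eqref{mer}. It then remains to feed in the Prime Number Theorem: $\log x\sim\log(p_n\#)=\vartheta(p_n)\sim p_n$, while $p_n\sim n\log n$ and $\log p_n\sim\log n$. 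Substituting,
\begin{equation*}
L_1\sim\frac{\log p_n}{c\,\vartheta(p_n)}\sim\frac{\log n}{c\,n\log n}=\frac{1}{c\,n},
\end{equation*}
so $L_1=O(n^{-1})$ with $\theta_1=1/c$. The admissible sieving range $c\in[1/2,1)$ then yields $1<\theta_1\leq2$, matching \eqref{asim} for $k=1$.

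For $k=2$ I would reduce to the previous case by the twin-prime factorization $\frac{p-2}{p}=\bigl(\frac{p-1}{p}\bigr)^2\frac{p(p-2)}{(p-1)^2}$, which gives
\begin{equation*}
L_2=L_1^2\prod_{p_n<p\leq x^c}\frac{p(p-2)}{(p-1)^2}.
\end{equation*}
The remaining product is a tail of the convergent singular series $\prod_{p\geq3}\frac{p(p-2)}{(p-1)^2}=C_2$ (the twin-prime constant); since its lower endpoint $p_n\to\infty$, the tail tends to $1$, whence $L_2\sim L_1^2\sim(cn)^{-2}=O(n^{-2})$ and $\theta_2=\theta_1^2=1/c^2\in(1,4]$. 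The general bounds $(15/16)^{k-1}<\theta_k\leq2^k$ follow by raising the $k=1$ estimate to the $k$-th power (upper bound $2^k$ at $c=1/2$) and bounding the analogous singular tail below by its worst single factor, which for $n>1$ first occurs at $p=5$ and equals $\frac{5\cdot3}{4^2}=\frac{15}{16}$.

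I expect the main obstacle to be the uniform control of error terms when passing to the ratio and tail of the sieve products: Mertens' theorem supplies each factor only up to $(1+o(1))$, so one must verify that these $o(1)$ contributions do not corrupt the constant $\theta_k$. This is best handled by working throughout with the partial sum $\sum_{p_n<p\leq x^c}1/p$ and the absolutely convergent correction $\sum_{p>p_n}O(1/p^2)=o(1)$, rather than by dividing two asymptotic expansions directly, so that the additive Mertens constant cancels cleanly in the difference. The secondary difficulty is making the twin-prime tail estimate quantitative enough to pin down the explicit lower constant appearing in \eqref{asim}.
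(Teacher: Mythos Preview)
Your argument is correct and shares the paper's skeleton: identify $\mathcal{S}_{x,g}=\{p_1,\dots,p_n\}$, collapse $L_k$ to a tail product over $p_n<p\leq x^c$, evaluate the $k=1$ tail via Mertens as $\log p_n/(c\log x)$, feed in $\log x\sim\vartheta(p_n)$, and for $k=2$ use the Hardy--Littlewood factorisation $\frac{p-2}{p}=\bigl(\frac{p-1}{p}\bigr)^2\frac{p(p-2)}{(p-1)^2}$ with the twin-prime constant. The paper does exactly this; see equations \eqref{Er} and the computation of $\theta_2$.

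Where you diverge is in how the ratio $\log p_n/\vartheta(p_n)$ is identified with $1/n$. You invoke the PNT in the form $p_n\sim n\log n$, $\log p_n\sim\log n$ and cancel. The paper instead routes this through $\pi(p_n)=n$ and the explicit Lemma~\ref{T2}, then bounds the Mertens error via Rosser--Schoenfeld, packaged into the auxiliary functions $\varphi,\lambda_1,\lambda_2$ of Lemma~\ref{intL1}, and squeezes. Your route is cleaner and entirely elementary (no Dusart/Trudgian inputs needed), but it yields only the bare asymptotic; the paper's machinery buys the explicit constants appearing in \eqref{int5}--\eqref{int3} and feeds directly into Theorem~\ref{Aa} and Corollaries~\ref{plo1}--\ref{plo2}. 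One small point: your justification of the lower constant $15/16$ as ``the worst single factor at $p=5$'' is not quite how the paper obtains it, since $\theta_2$ carries the full tail $\prod_{p_n<p\leq x^c}\frac{p(p-2)}{(p-1)^2}$ rather than a single term; the paper simply asserts this bound after noting $\theta_2\to c^{-2}$, and your closing paragraph rightly flags this as the residual loose end.
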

Lemma \ref{T2} aids in proving Theorem \ref{U} and is stated below (see Lemmata \ref{coo2} and \ref{coo3} in Section 6 for sharper bounds). 
\begin{lemma} \label{T2} For $x \geqslant  599$, 
\begin{equation*}
\left| \frac{1}{\pi(x)}-\frac{\log x}{\vartheta(x)} \right | < 0.30543\frac{\log x}{\vartheta(x)}.
\end{equation*}
\end{lemma}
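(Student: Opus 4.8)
The plan is to recast the inequality in relative form and then feed it explicit Chebyshev estimates. Dividing through by the positive quantity $\log x/\vartheta(x)$, the assertion is equivalent to
\[
\left| 1 - \frac{\vartheta(x)}{\pi(x)\log x} \right| < 0.30543 .
\]
Writing $R(x) = \vartheta(x)/(\pi(x)\log x)$, I would first observe, by Abel summation, that
\[
\vartheta(x) = \sum_{p \leqslant x}\log p = \pi(x)\log x - \int_2^x \frac{\pi(t)}{t}\,dt ,
\]
so that $\vartheta(x) < \pi(x)\log x$ and hence $0 < R(x) < 1$ for $x > 2$. In particular the upper half of the absolute value is automatic ($R(x) < 1 < 1.30543$), and it remains only to prove the one-sided bound $R(x) > 1 - 0.30543 = 0.69457$.

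To bound $R(x)$ from below I would substitute an explicit lower bound for $\vartheta(x)$ and an explicit upper bound for $\pi(x)$ of Rosser--Schoenfeld type, namely inequalities of the shape $\vartheta(x) > x\left(1 - a/\log x\right)$ and $\pi(x) < \frac{x}{\log x}\left(1 + b/\log x\right)$ valid on the relevant range. These give
\[
R(x) > \frac{1 - a/\log x}{1 + b/\log x} = \frac{\log x - a}{\log x + b} =: f(\log x).
\]
Since $f(u) = (u-a)/(u+b)$ has derivative $(a+b)/(u+b)^2 > 0$, the function $f$ is increasing in $u = \log x$; consequently the minimum of the right-hand side over $x \geqslant 599$ is attained at the left endpoint $x = 599$.

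It then suffices to evaluate $f(\log 599)$ with $\log 599 \approx 6.3953$ and check that it exceeds $0.69457$; the constant $0.30543$ is precisely $1 - f(\log 599)$ for the sharpest admissible choice of $a,b$, and the threshold $x \geqslant 599$ is exactly the range on which those sharper explicit estimates hold. An equivalent and perhaps more transparent route clears denominators entirely: the claim reduces to $\int_2^x \pi(t)/t\,dt < 0.30543\,\pi(x)\log x$, which one bounds by inserting an explicit estimate $\pi(t) < c\,t/\log t$ into the integral.

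The main obstacle is quantitative rather than structural: the stated constant is not the true relative gap at $x=599$ (which is closer to $1/\log x \approx 0.16$) but the value forced by the explicit inequalities, so the whole game is to select a pair of $\pi$- and $\vartheta$-bounds tight enough to yield $0.30543$ rather than a weaker figure, while ensuring both remain valid throughout $x \geqslant 599$. Verifying the monotonicity of $f$ (so that the endpoint $x=599$ really is the extremal case) and confirming the arithmetic at that endpoint are the only remaining points of care.
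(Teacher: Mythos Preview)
Your approach is essentially the paper's: rewrite the claim as $|H(x)-1|<0.30543$ with $H(x)=\vartheta(x)/(\pi(x)\log x)$, bound $H$ from below via explicit Chebyshev-type inequalities, and evaluate at the endpoint $x=599$. The paper executes this with Trudgian's bound $|\vartheta(x)-x|<x\epsilon_0(x)$ and Dusart's upper bound $\pi(x)<\frac{x}{\log x}\bigl(1+\frac{1}{\log x}+\frac{2}{\log^2 x}+\frac{7.59}{\log^3 x}\bigr)=\frac{x}{\log x}\rho(x)$, setting $\varphi(x)=\bigl|\tfrac{1-\epsilon_0(x)}{\rho(x)}-1\bigr|$ and computing $\varphi(599)\approx 0.30543$. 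Your Abel-summation remark that $H(x)<1$ always (so only the lower bound on $H$ is at issue) is a clean touch the paper leaves implicit; on the other hand, be aware that the simple shape $\frac{1-a/\log x}{1+b/\log x}$ with \emph{fixed} $a,b$ valid on $[599,\infty)$ does not quite reach $0.30543$ --- the paper's constant comes specifically from Trudgian's $\epsilon_0$ (which decays faster than $1/\log x$) together with the cubic-order Dusart $\rho$, so to match the stated constant you must use those particular estimates rather than generic Rosser--Schoenfeld ones.
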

Theorem \ref{Aa} provides evidence of the asymptotic relation given in Eq. \eqref{asim}.
\begin{theorem} \label{Aa}
For any primorial $p_n\#$, $n>168\ 064$,
\begin{equation}\label{uio1}
\left| L_k(p_n;n)-\theta_k\frac{1}{n^k}\right|<\theta_k\left(\frac{0.0000642}{n}\right)^k.
\end{equation}
\end{theorem}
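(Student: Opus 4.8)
The plan is to treat Theorem~\ref{Aa} as the effective (explicit-constant) refinement of the asymptotic $L_k\sim\theta_k n^{-k}$ of Theorem~\ref{U}, so that the entire task becomes tracking every error term with explicit constants valid for $n>168\,064$. The first step is the algebraic simplification that makes this feasible: for a primorial argument the exceptional set is $\mathcal{S}_{x,g}=\{p_1,\dots,p_n\}$, so the quotient in Definition~\ref{dpol} collapses to a single tail product,
\[
L_k(p_n;n)=\prod_{p_n<p\le x^c}\frac{p-k}{p},\qquad \log\!\big(x^c\big)=c\,\vartheta(p_n).
\]
The point of this form is that the slowly varying constants attached to $\prod_{p\le y}\frac{p-k}{p}$ --- the factor $e^{-k\gamma}$ when $k=1$ and the Hardy--Littlewood twin constant when $k=2$ --- are common to the numerator and denominator of the quotient and cancel identically. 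What survives, after inserting an effective (Rosser--Schoenfeld style) version of Mertens' Theorem~\eqref{mer} at the two cut-offs, is
\[
L_k(p_n;n)=c^{-k}\Big(\tfrac{\log p_n}{\vartheta(p_n)}\Big)^{k}\big(1+\varepsilon_k(n)\big),
\]
where $\varepsilon_k(n)$ absorbs the Mertens error terms. Because the upper cut-off $x^c=(p_n\#)^{c}$ is enormous, its contribution to $\varepsilon_k(n)$ is vanishingly small, so $\varepsilon_k(n)$ is governed entirely by the behaviour at $p_n$.

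The second step replaces $\log p_n/\vartheta(p_n)$ by $n^{-1}=\pi(p_n)^{-1}$. This is precisely the comparison quantified by Lemma~\ref{T2}, and for the stated range I would invoke the sharp Lemmata~\ref{coo2} and~\ref{coo3} in place of the crude constant $0.30543$; they furnish an explicit bound $\rho(n)$ on the relative discrepancy between $\pi(p_n)^{-1}$ and $\log p_n/\vartheta(p_n)$. Substituting gives $L_k=c^{-k}n^{-k}\big(1+\rho(n)\big)^{k}\big(1+\varepsilon_k(n)\big)$, which pins the constant as $\theta_k=c^{-k}$ --- consistent with the window $(15/16)^{k-1}<\theta_k\le 2^{k}$ --- and reduces the theorem to bounding $\big(1+\rho(n)\big)^{k}\big(1+\varepsilon_k(n)\big)-1$. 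To reach the clean power law $(0.0000642/n)^{k}$ I would not estimate the $k$-th power directly but exploit the multiplicative structure: the identity $\frac{p-k}{p}=\big(\frac{p-1}{p}\big)^{k}\cdot\frac{p^{k-1}(p-k)}{(p-1)^{k}}$ exhibits $L_k=L_1^{k}\cdot T_k(p_n,x^c)$, where $T_k=\prod_{p_n<p\le x^c}\frac{p^{k-1}(p-k)}{(p-1)^{k}}$ is a tail of a convergent Euler product and hence equals $1+O\!\big(1/(p_n\log p_n)\big)$, negligible at the threshold. Thus it suffices to control the single case $k=1$ and then raise it to the $k$-th power.

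The main obstacle is the explicit bookkeeping in the second step: unlike the soft asymptotic of Theorem~\ref{U}, here every constant must be tracked and shown to collapse into the single figure $0.0000642$ at $n=168\,064$. Concretely, this requires (i) explicit upper and lower bounds of Chebyshev/Dusart type for $\vartheta(p_n)$ and for $\pi$ at the sieve bound, valid from the threshold onward; (ii) the sharp form of the $\pi$-versus-$\vartheta$ comparison from Lemmata~\ref{coo2} and~\ref{coo3}, which is where the delicate constant originates; and (iii) verification that the genuinely negligible contributions --- the Mertens error at $x^c$ and the tail product $T_k$ --- are dominated by $\rho(n)$ so that they do not degrade the constant. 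The calculus is routine once these inputs are in place; the real work, and the only place the argument can fail, is ensuring the explicit comparison constant does not exceed $0.0000642$ throughout the range $n>168\,064$, which is exactly the role of the threshold and of the sharp lemmata.
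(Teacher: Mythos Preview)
Your overall architecture matches the paper's: write $L_k$ via explicit Mertens-type bounds at the two cut-offs $p_n$ and $x^c$, observe that the contribution at $x^c=(p_n\#)^c$ is negligible, and then pass from $\log p_n/\vartheta(p_n)$ to $1/n$ via the $\pi$-versus-$\vartheta$ comparison. The paper does exactly this, using Dusart's bound \eqref{dus} and Axler's bounds \eqref{axl}--\eqref{axl2} for the Mertens step, and $\varphi$ (evaluated at $p_n\#$, giving the constant of Lemma~\ref{coo3}) for the comparison step.

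However, you have the bookkeeping inverted in a way that matters for execution. You say the delicate constant $0.0000642$ ``originates'' from the $\pi$-versus-$\vartheta$ comparison and propose to extract it from Lemmata~\ref{coo2}--\ref{coo3}. In the paper it is the other way around: the Mertens error at $p_n$ dominates. Dusart's inequality \eqref{dus} requires $p_n\ge 2\,278\,382$, and since $p_{168\,065}=2\,278\,421$ this is precisely why the threshold is $n>168\,064$; evaluating $1/(5\log^3 p_n)$ there gives $\approx 6.375\times 10^{-5}$, which is essentially the whole constant. The $\pi$-versus-$\vartheta$ contribution is only $\varphi(p_n\#)\approx 4.39\times 10^{-7}$, and Lemmata~\ref{coo2}--\ref{coo3} are stated for $x\ge 3\times 10^{120}$ and $x\ge 8\times 10^{989\,079}$, so they cannot be applied at $p_n\sim 2.3\times 10^{6}$ as your plan suggests --- they are applied at $x=p_n\#$. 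If you tried to carry out your plan literally, you would find that neither the threshold nor the constant falls out of the comparison lemma; both come from \eqref{dus}.
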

The following definition is stated for the sake of convenience and in preparation for Theorem \ref{denns}. 
\begin{definition}
For $x \in \mathbb{N^+}$, define $\mathcal{P}_x$ to be the set of all prime numbers less than the square root of $x$: $$\mathcal{P}_x = \{p: p \leqslant  \sqrt{x}\}.$$
\end{definition}
Although numbers $p_n\#\pm1$ become rarely prime, i.e., $L_k(p_n; n) \ll 1$, as $n\to \infty$, Theorem \ref{denns} states that these numbers have the highest probability of being prime and being twin primes compared to all other numbers with the same number of primes less than their square root. This is a reasonable comparison given that only the prime numbers less than the square root of a number determine that number's primality.

\begin{theorem} \label{denns}
Let $x=p_n\#\pm1$, for $n>1$, and let $\overline{x}$ be any integer that satisfies both $\mathcal{P}_{\overline{x}}=\mathcal{P}_{x}$ and $\overline{x} \neq x$. We have $$L_k(p_n; n) >L_k(\overline{x}),\hspace{0.5cm} k=1,2.$$
\end{theorem}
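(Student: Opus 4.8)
Throughout I take $c=\tfrac12$, which is the relevant normalization here, a number's primality being decided by the primes up to its square root, as the discussion preceding the theorem stresses. With $c=\tfrac12$ the right-hand factor $\prod_{p\leqslant y^{c},\,p>k}\tfrac{p-k}{p}$ in Definition \ref{dpol} is exactly the product of $\tfrac{p-k}{p}$ over the primes $p\in\mathcal{P}_y$ with $p>k$. Because $\mathcal{P}_{\overline{x}}=\mathcal{P}_{x}$ by hypothesis, this factor takes the identical value for $x$ and for $\overline{x}$; call it $N$. Writing $D(y)\equiv\prod_{p\in\mathcal{S}_{y,b},\,p>k}\tfrac{p-k}{p}$ for the remaining (denominator) product, we have $L_k(p_n;n)=N/D(x)$ and $L_k(\overline{x})=N/D(\overline{x})$, so the whole theorem is equivalent to the single strict inequality
\[
D(x)<D(\overline{x}),\qquad k=1,2.
\]

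The first concrete step is to identify $\mathcal{S}_{x,g}$. For $x=p_n\#\pm1$ we have $x-g=p_n\#$ and $g=\pm1$, so $\mathcal{S}_{x,g}=\{p:p\mid p_n\#\}=\{p:p\leqslant p_n\}$ is the complete initial segment of the first $n$ primes, and $D(x)=\prod_{k<p\leqslant p_n}\tfrac{p-k}{p}$. Each factor $\tfrac{p-k}{p}=1-\tfrac{k}{p}$ lies in $(0,1)$ and increases with $p$, so the governing principle is an exchange (rearrangement) fact: among all finite sets $\mathcal{S}$ of primes $>k$ with $\prod_{p\in\mathcal{S}}p\leqslant B$, the quantity $\prod_{p\in\mathcal{S}}\tfrac{p-k}{p}$ is minimized, and minimized uniquely, by the longest initial segment of primes whose product does not exceed $B$. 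Indeed, replacing any prime in $\mathcal{S}$ by a smaller prime not already present lowers both the product and the budget consumed, while adjoining any admissible missing small prime lowers the product outright; hence a minimizer can only be a full initial segment, and the longer the better.

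It remains to feed in the arithmetic that makes $\{p\leqslant p_n\}$ that unique minimizer for every competitor. The key constraint is that $\prod_{p\in\mathcal{S}_{\overline{x},b}}p$ divides $\overline{x}-b$, so the available budget is $B=\overline{x}-b<\overline{x}<p_{m+1}^{2}$, where $p_m=\max\mathcal{P}_x$. I would then establish the prime-gap inequality $p_{m+1}^{2}<2\,p_n\#$: since $p_m\leqslant\sqrt{x}$ gives $p_m^{2}\leqslant p_n\#+1$, this reduces to $p_{m+1}<\sqrt{2}\,p_m$, which holds for all sufficiently large $m$ by an explicit gap bound such as Nagura's $p_{m+1}<1.2\,p_m$. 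Because $2\,p_n\#<p_{n+1}\#$ for $n>1$, the budget $B<2\,p_n\#$ cannot accommodate any initial segment longer than $\{p\leqslant p_n\}$, so $\{p\leqslant p_n\}$ is indeed the unique minimizer of $D$ in the feasible range. Finally, $\mathcal{S}_{\overline{x},b}$ equals this minimizer only if every prime $\leqslant p_n$ divides $\overline{x}-b$, i.e. $p_n\#\mid\overline{x}-b$; with $0<\overline{x}-b<2\,p_n\#$ this forces $\overline{x}-b=p_n\#$, hence $\overline{x}=x$. For any $\overline{x}\neq x$ the set $\mathcal{S}_{\overline{x},b}$ is therefore strictly suboptimal, giving $D(\overline{x})>D(x)$ and the claimed $L_k(p_n;n)>L_k(\overline{x})$.

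The heart of the proof --- and its main obstacle --- is the quantitative prime-gap control $p_{m+1}^{2}<2\,p_n\#$ that pins $\overline{x}$ into the tight window $[p_m^{2},p_{m+1}^{2})$ around $p_n\#$; without it a second multiple of $p_n\#$ (such as $2\,p_n\#+b$), or a longer initial segment, could slip into range and produce equality rather than strict inequality. For the finitely many small $n$ at which Nagura's bound does not yet apply, I would verify the window directly. I would also make explicit the reading of $\mathcal{S}_{\overline{x},b}$ for a non-primorial $\overline{x}$, namely that $b$ is taken from $\overline{x}$'s universal-primorial representation as in Definition \ref{dpol}; the rearrangement step shows the conclusion is in fact insensitive to which admissible $b$ is chosen, since no choice can make $\overline{x}-b$ divisible by more of the small primes than $p_n\#$ already is.
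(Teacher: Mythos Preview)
Your reduction to the inequality $D(x)<D(\overline{x})$ is the same opening move as the paper's, but from there the two arguments diverge genuinely. The paper writes $\overline{x}=K_1\prod_{p\in\mathcal{\overline P}}p\pm1$ for a set $\mathcal{\overline P}\subsetneq\mathcal{P}_{\overline{x}}$ containing $2$, uses only Bertrand's postulate to obtain the coarse relation $x=T\overline{x}$ with $T\in(1/4,4)$, and then carries out a direct term-by-term algebraic comparison of $\prod_{p\le p_n}\tfrac{p}{p-k}$ against $\prod_{p\in\mathcal{\overline P}}\tfrac{p}{p-k}$, splitting into the cases $|\mathcal{\overline P}|=n$ and $|\mathcal{\overline P}|<n$ and pairing off primes via auxiliary constants $z_y$ with $p_j=z_yp_s$.

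Your route is more structural: the rearrangement principle---that among prime sets with $\prod p\le B$ the quantity $\prod(p-k)/p$ is uniquely minimized by the longest initial segment---is clean and makes the optimality of $\{p\le p_n\}$ transparent. The price is a sharper gap input than Bertrand: to force $\overline{x}<p_{m+1}^2<2p_n\#$ (and thereby exclude $\overline{x}=2p_n\#\pm1$ from the admissible window) you need essentially a prime in $(y,\sqrt{2}\,y]$ at $y=\sqrt{x}$; Nagura handles all large $m$, and of the exceptional ratios $p_{m+1}/p_m>\sqrt{2}$ at $m\in\{1,2,4\}$ only $m=1$ (i.e.\ $n=2$) actually arises from a primorial, so the hand check you propose is tiny. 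The paper's case analysis avoids this sharper gap estimate but pays with more intricate bookkeeping. You also flag more explicitly than the paper does the ambiguity in how $b$ is selected for a general $\overline{x}$; your rearrangement step is indeed robust to that choice, a point the paper leaves implicit by simply fixing a representation $\mathcal{\overline P}$.
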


\section{A Lemma on Universal Primorials}

A universal primorial is never divisible by any prime $p \leqslant  p_n$, so one may naturally wonder if a universal primorial $x$ can always be found such that the largest prime less than the square root of $x$ is still less than $p_n$ since such numbers would always be prime. By using factorials as a heuristic, it is clear that there must exist some number $x_0$, where for all universal primorials greater than $x_0$, there does not exist any universal primorial for which this occurs. Lemma \ref{A} shows that $x_0=106$ and is proved next. Note that instead of leaving $N$ to be arbitrary, we fix $N=1$ to minimize $K$ so that $x_0$ to be maximized.
\begin{lemma}\label{A}
Let $x=\frac{1}{2}p_n\#\pm1$. We find 
\begin{equation}p_n\# < \prod_{p \leqslant  \sqrt{x}} p, \hspace{0.5cm} x>106.
\end{equation}
\end{lemma}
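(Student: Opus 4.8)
The plan is to recast the product inequality as a statement about the existence of a prime in a short interval. Writing $P(y)=\prod_{p\leqslant y}p$, the claim $p_n\#<P(\sqrt{x})$ says exactly that the set of primes $\leqslant\sqrt{x}$ strictly contains $\{p_1,\dots,p_n\}$; since $P$ changes value only at primes, this holds precisely when $p_{n+1}\leqslant\sqrt{x}$, i.e.\ when $p_{n+1}^2\leqslant x$. As $x=\tfrac12 p_n\#\pm1$, the smaller of the two values is $\tfrac12 p_n\#-1$, so it suffices to establish
\begin{equation*}
p_{n+1}^2\leqslant \tfrac12 p_n\#-1.
\end{equation*}
A direct check of the values $\tfrac12 p_n\#\pm1$ for $n\leqslant 4$ shows that $x>106$ forces $n\geqslant 5$: indeed the case $n=4$ with sign $+$ gives exactly $x=106$ and the \emph{equality} $P(\sqrt{x})=p_4\#=210$, so the task reduces to proving the displayed inequality for every $n\geqslant 5$.

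For that reduction I would invoke Bertrand's postulate in the form $p_{n+1}<2p_n$, giving $p_{n+1}^2<4p_n^2$; hence it is enough to prove the cleaner bound
\begin{equation*}
p_n\#\geqslant 8p_n^2+2 \qquad (n\geqslant 5).
\end{equation*}
This I would settle by induction on $n$. The base case $n=5$ is the numerical check $p_5\#=2310\geqslant 970=8\cdot 11^2+2$. For the inductive step, assuming $p_n\#\geqslant 8p_n^2+2$, factor $p_{n+1}\#=p_{n+1}\,p_n\#>8\,p_{n+1}p_n^2$; since $n\geqslant 5$ gives $p_{n+1}\geqslant 13$, this already exceeds $104\,p_n^2$, whereas Bertrand yields $8p_{n+1}^2+2<32p_n^2+2\leqslant 104\,p_n^2$, closing the induction.

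The mathematics here is not deep: the primorial grows like $e^{\vartheta(p_n)}\approx e^{p_n}$ and so dwarfs the polynomial $p_n^2$ almost immediately, which is why the crude tools (Bertrand plus one induction) are amply sufficient. The only genuinely delicate point, and hence the main obstacle, is bookkeeping rather than analysis: one must verify the finitely many small cases $n\leqslant 4$ carefully—including the boundary equality at $x=106$—to confirm that the inequality first becomes strict exactly as $x$ passes $106$, thereby pinning the threshold at $x_0=106$ rather than some neighboring value. One should also confirm that the $-1$ branch is the worse of the two signs, so that proving $p_{n+1}^2\leqslant\tfrac12 p_n\#-1$ covers the $+1$ branch automatically.
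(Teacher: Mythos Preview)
Your argument is correct and shares the paper's core reduction: both proofs observe that $p_n\#<\prod_{p\leqslant\sqrt{x}}p$ is equivalent to $p_{n+1}\leqslant\sqrt{x}$, and both then invoke Bertrand's postulate to compare $p_{n+1}^2$ against $\tfrac12 p_n\#$. The difference lies only in the closing step. The paper argues by contradiction: assuming $p_{n+1}^2>\tfrac12 p_n\#$, it applies Bertrand twice to peel off $p_n$ and $p_{n-1}$ and deduce $p_{n-2}\#<16$, forcing $n\leqslant 4$. You instead prove the direct inequality $p_n\#\geqslant 8p_n^2+2$ for $n\geqslant 5$ by a clean induction. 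Your route is slightly more explicit and self-contained (the inductive step makes the super-polynomial growth of the primorial transparent), while the paper's route avoids induction at the cost of a somewhat terser chain of inequalities; substantively the two are the same argument.
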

\begin{proof}
Suppose that  $|\mathcal{P}_x|=n$. It follows that $p_{n+1}>\sqrt{x}$ where $p_{n+1}$ is the subsequent prime to $p_n$. Substitute $p_n\#/2$ in for $x$ because 
$$1>\sqrt{\frac{1}{2}p_n\#+4} -\sqrt{\frac{1}{2}p_n\#}, \hspace{0.5cm} x>\frac{5}{2}+4.
$$
Thus $p_{n+1}>\sqrt{p_n\#/2}.$ By Bertrand's Postulate \cite{2}, we have $p_{n+1}< 2p_n$, which leads to 
$$y_1^2p_n>\frac{p_n\#}{2p_n}, \hspace{0.5cm} 1< y_1, y_2 < 2.$$
Hence,
$$8>y_1^2y_2>\frac{p_n\#}{2p_np_{n-1}}.$$
So, at most $p_{n-2}=3$, implying that $p_n=7$. Lemma \ref{A} follows. 
\end{proof}

\section{Proof of Main Results}
\subsection{Proof of Lemma \ref{T2} }
\begin{proof}
Let $H: \mathbb{R^+} \to \mathbb{R^+}$ be given by 
$$H(x)= \frac{\vartheta(x)}{\pi(x)\log x}.$$
Consider the sharp result below provided by Trudgian \cite{14}:
$$|\vartheta(x)-x|\underset{x \geqslant  149}{<}x\epsilon_0(x),$$
\begin{equation} \label{hol}
\epsilon_0(x)=\sqrt{\frac{8}{17\pi}}X^{1/2}e^{-X}, \hspace{0.5cm} X=\sqrt{\frac{\log x}{6.455}}.
\end{equation}
We find 
\begin{equation} \label{pd}
 \left| \frac{1}{\pi(x)}-\frac{\log x}{\vartheta(x)} \right | = \frac{\log x}{\vartheta(x)}\left | H(x)-1 \right|.
 \end{equation}
Dusart \cite{6, 7} shows that
\begin{equation}\label{d1}
\frac{x}{\log x} \left(1+\frac{1}{\log x}\right)\underset{x\geqslant  599}{<} \pi(x) \underset{x\geqslant  1}{<} \frac{x}{\log x} \left (1+\frac{1}{\log x}+\frac{2}{\log^2(x)}+\frac{7.59}{(\log x)^3}\right).
\end{equation}
We use values $x \geqslant  599$ in any case to accommodate the second inequality. Let 
\begin{equation*}
\rho(x)=1+\frac{1}{\log x}+\frac{2}{\log^2(x)}+\frac{7.59}{(\log x)^3},
\end{equation*}
and define
\begin{equation*}
\varphi(x) =\left|\frac{1-\epsilon_0(x)}{\rho(x)}-1\right|, \hspace{0.5cm} \mbox{ for $x\geqslant 599.$}
\end{equation*}
It follows straightforwardly that $\varphi(x)$ produces the largest value that $|H(x)-1|$ can obtain for all values greater than or equal to $x$. Lemma \ref{T2} is obtained from $\varphi(599) \approx 0.30543.$
\end{proof}

\subsection{Proof of Theorem \ref{U}}
The statement $\vartheta(x) \sim x$ is equivalent to the PNT.  For sufficiently large $x$, as given by $x \gg 1$, we write $x  \approxeq \vartheta(x)$ to establish that $x$ estimates $\vartheta(x)$ well.  Consequently, 
$$\log(Kp_n\#+g) \approxeq p_n, \hspace{0.5cm} \mbox{$p_n\#  \gg  K, p_n \gg 1.$}$$ Definition \ref{dd1} provides a criteria for classifying $K$.
\begin{definition} \label{dd1}
Let $x$ be universal primorial and define the following function
\begin{equation*} \label{EE}
\alpha(x, k ;p_n)=\bigg(\frac{\log p_n\#}{\log x}\bigg)^k.
\end{equation*}
We say that $K$ is sufficiently small if $\alpha(x, k ;p_n) \sim 1$.
\end{definition}
For large enough $x$ and sufficiently small $K$, $\alpha(x, k ;p_n) \approxeq 1$ is reasonable, which can not necessarily be said if $\alpha(x, k ;p_n) \nsim 1$.

Consider that if $x$ is a universal primorial, then $\mathcal{S}_{x, b}=\{p: p \leqslant  p_n\}$, and we have
\begin{equation}\label{Er}
L_k(p_n; n, x)=\frac{1}{\prod\limits_{ p \leqslant  p_n} \frac{p-k}{p}}\prod_{\substack{p \leqslant  x^c \\ p > k}} \frac{p-k}{p}=\theta_k\left(\frac{\log p_n}{\log x}\right)^k,
\end{equation}
where $\theta_k$ is a constant with partial dependence on $k$. 
For $p_n \gg 1,$ $\log x= \log (Kp_n\#+g)\approx\log (Kp_n\#)$. Moreover,
\begin{equation*}
\bigg(\frac{\log p_n}{\log x}\bigg)^k=\alpha(x, k; p_n)\bigg(\frac{\log p_n}{\vartheta(p_n)}\bigg)^k.
\end{equation*}

Now, it can easily be show that $L_k(p_n; n)=O(n^{-k})$. For instance, from \cite[Theorem 8]{13} (see its Corollary as well), we find weak bounds for $L_k(p_n; n, x)$:
\begin{equation} \label{int5}
\begin{split}
\theta_k\alpha(x, k; p_n) \left(\frac{\log p_n}{\vartheta(p_n)}\right)^k&\frac{1-\frac{1}{\log^2(x)}}{1+\frac{1}{2\log^2(p_n)}}<L_k(p_n; n, x) \\
&<\theta_k\alpha(x, k; p_n) \left(\frac{\log p_n}{\vartheta(p_n)}\right)^k\frac{1+\frac{1}{2\log^2(x)}}{1-\frac{1}{\log^2(p_n)}}, \hspace{0.5cm} \mbox{$n>1$.}
\end{split}
\end{equation}
The far-right term in the upper bound above takes on its largest value of $6.9579$ when $x=3\#-1$. 
Since
\begin{equation} \label{int1}
\frac{1}{[(1+\varphi(x))\pi(x)]^k}<\left(\frac{\log p_n}{\vartheta(p_n)}\right)^k<\frac{1}{[(1-\varphi(x))\pi(x)]^k},
\end{equation}
we have 
$$\left(\frac{0.76603}{n}\right)^k< \left(\frac{\log p_n}{\vartheta(p_n)}\right)^k<\left(\frac{1.4397}{n}\right)^k, \hspace{0.5cm} \mbox{$x\geqslant 599$,}$$
by Lemma \ref{T2}. Now take $K$ to be sufficiently small. Therefore, 
\begin{equation*}
L_k(p_n; n)<6.9579 \theta_k \left(\frac{1.4397}{n}\right)^k=O(n^{-k}).
\end{equation*}

We now motivate $L_k(p_n; n) \sim \theta_k n^{-k}$, with proof provided shortly after Lemma \ref{intL1}. Consider that $\epsilon_0(x)$ converges to zero monotonically, i.e., $\epsilon_0^{\prime}(x)<0$ for $x>5.022$. Thus, for $x>3.209$, Eq. \eqref{int5} can be re-written as the following using Eq. \eqref{int1}:
\begin{equation} \label{int3}
\frac{\theta_k\alpha(x, k; p_n) }{[(1+\varphi(x))\pi(x)]^k}\lambda_1(x)<L_k(p_n; n, x)<\frac{\theta_k\alpha(x, k; p_n) }{[(1-\varphi(x))\pi(x)]^k}\lambda_2(x),
\end{equation}
where
\begin{equation*}
\begin{split}
&\lambda_1(x)=\frac{1-\frac{1}{\log^2(x)}}{1+\frac{1}{2\log^2 \left(0.8576\alpha(x, 1; p_n), \log x\right)}}, \\
&\lambda_2(x)=\frac{1+\frac{1}{2\log^2(x)}}{1-\frac{1}{\log^2\left(0.8576\alpha(x, 1; p_n), \log x\right)}},
\end{split}
\end{equation*}
and $(1+\max \epsilon_0(p_n))^{-1}=0.8576$. The following Lemma dilutes the bounds in Eq. \eqref{int3} for all $x>x_i, i=1,2,3,4.$
\begin{lemma} \label{intL1} We have the following:
\begin{enumerate}
\item there exists constants $A_1, A_2, \tilde{A_2}>0$ such that 
\begin{enumerate} [label=(\roman{*})]
\item if $\alpha(x, 1; p_n)\sim 1$, then $\forall x \geqslant  x_1>0$ we find 
\begin{equation*}
\lambda_1(x)>1-\frac{A_1}{\log^2(x)},
\end{equation*}
\item if $\alpha(x, 1; p_n)\nsim 1$, then $\forall x \geqslant  x_2>0$ we find
\begin{equation*}
\lambda_1(x)>\tilde{A_2}-\frac{A_2}{\log^2(x)};
\end{equation*}
\end{enumerate}
\item there exists constants $A_3, A_4, \tilde{A_4}>0$ such that 
\begin{enumerate} [label=(\roman{*})]
\item if $\alpha(x, 1; p_n)\sim 1$, then $\forall x \geqslant  x_3>0$ we find
\begin{equation*}
\lambda_2(x)<1+\frac{A_3}{\log^2( \log x)},
\end{equation*}
\item if $\alpha(x, 1; p_n)\nsim 1$, then $\forall x \geqslant  x_4>0$ we find
\begin{equation*}
\lambda_2(x)<\tilde{A_4}+\frac{A_4}{\log^2(x)}.
\end{equation*}
\end{enumerate}
\end{enumerate}
\end{lemma}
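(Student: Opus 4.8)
The plan is to treat both $\lambda_1$ and $\lambda_2$ as quotients of two nearly-unit factors and to reduce each of the four assertions to an elementary estimate on $(1\mp u)/(1\pm v)$. Writing $u=1/\log^2 x$ for the outer correction and noting that the inner correction is built from the nested logarithm $\log^2\!\big(0.8576\,\alpha(x,1;p_n)\log x\big)$, I would first record the reciprocal inequalities $\tfrac{1}{1+v}\ge 1-v$ (valid for all $v\ge 0$) and $\tfrac{1}{1-v}=1+\tfrac{v}{1-v}$ (valid once $v<1$), and apply them to obtain
\begin{align*}
\lambda_1(x) &\ge \Big(1-\tfrac{1}{\log^2 x}\Big)\Big(1-\tfrac{1}{2\log^2(0.8576\,\alpha\log x)}\Big),\\
\lambda_2(x) &\le \Big(1+\tfrac{1}{2\log^2 x}\Big)\Big(1+\tfrac{1}{\log^2(0.8576\,\alpha\log x)-1}\Big),
\end{align*}
where I abbreviate $\alpha=\alpha(x,1;p_n)$. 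Everything then hinges on how the inner logarithm behaves, which is exactly what the case split on $\alpha$ is designed to control.

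In the asymptotic case $\alpha\sim 1$ I would substitute $\alpha=1+o(1)$ and expand
$$\log\!\big(0.8576\,\alpha\log x\big)=\log\log x+\log(0.8576)+\log\alpha\sim\log\log x,$$
so the inner correction is of size $\tfrac{1}{2}\log^{-2}\log x$ up to lower order. Multiplying out the two displayed products — discarding the positive cross term for the $\lambda_1$ lower bound, and bounding the cross terms above for the $\lambda_2$ upper bound — then yields a lower bound for $\lambda_1$ and an upper bound for $\lambda_2$ whose leading constant is $1$ and whose decaying remainder is governed by the slower of the two corrections, i.e.\ by the inner $\log^{-2}\log x$ term; this is the shape claimed in (1)(i) and (2)(i). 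The thresholds $x_1,x_3$ are then taken to be the least $x$ for which $v<1$ (so the reciprocal inequalities hold) and for which the lower-order terms are dominated by the stated remainder, which I would pin down using the monotonicity of $\epsilon_0$ noted in the run-up to Eq.~\eqref{int3} together with the monotonicity of the logarithmic factors.

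In the non-asymptotic case $\alpha\nsim 1$, $\alpha$ is bounded away from $1$, so the inner logarithm no longer collapses to $\log\log x$ but instead carries a genuine $\log\alpha$ contribution; treating this as a multiplicative constant is what produces the prefactors $\tilde{A_2}$ and $\tilde{A_4}$ in (1)(ii) and (2)(ii), with the residual decay inherited as the $A_2\log^{-2}x$ and $A_4\log^{-2}x$ terms. The main obstacle, and the step I would spend the most care on, is controlling the nested logarithm uniformly in both $x$ and $p_n$: because $\alpha$ couples the two variables through $\alpha\log x=\vartheta(p_n)$, one must verify that the claimed decay scale is the correct one in each regime and that a single threshold $x_i$ renders the reciprocal inequalities, the expansion of the inner logarithm, and the control of the cross terms valid simultaneously. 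Once those thresholds are fixed, the four inequalities reduce to explicit one-variable estimates of the same flavour as the computation $\varphi(599)\approx 0.30543$ used for Lemma~\ref{T2}.
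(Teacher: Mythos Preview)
Your reduction to $(1\mp u)/(1\pm v)$ via $\frac{1}{1+v}\ge 1-v$ and the geometric expansion of $\frac{1}{1-v}$ is exactly the mechanism the paper uses (explicitly for (2)(i)), and it is the right way to organise all four parts. But there is a genuine mismatch you gloss over. For (1)(i) your product expansion gives
\[
\lambda_1(x)\ \ge\ 1-\frac{1}{\log^2 x}-\frac{1}{2\log^2(0.8576\,\alpha\log x)}+\text{(positive cross term)},
\]
and with $\alpha\sim 1$ the dominant negative term is $\tfrac{1}{2}\log^{-2}\log x$, not $\log^{-2}x$. You then write that ``this is the shape claimed in (1)(i) and (2)(i)'', but (1)(i) asks for $\lambda_1>1-A_1/\log^2 x$, which is strictly stronger --- and your own computation shows it is \emph{false}: $1-\lambda_1=\frac{u+v}{1+v}\sim\tfrac{1}{2}\log^{-2}\log x\gg A_1\log^{-2}x$ for every fixed $A_1$. (The paper's displayed proof of (1)(i) is itself defective: it bounds $\log^2(x)(1-\lambda_1)$ from \emph{below}, which is the wrong direction for the claim.) The honest conclusion is that (1)(i) should carry $\log^2(\log x)$ in the denominator, matching (2)(i); only $\lambda_1\to 1$ is used downstream, so this weakened form suffices, but you should state what you have actually proved.

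For the $\alpha\nsim 1$ cases your framing (``$\alpha$ bounded away from $1$'', ``treat $\log\alpha$ as a multiplicative constant'') is off: in this regime $p_n$ is fixed and $\alpha\to 0$, but the product $\alpha\log x=\vartheta(p_n)$ is a genuine constant independent of $x$, so the inner logarithm $\log(0.8576\,\alpha\log x)=\log(0.8576\,\vartheta(p_n))$ does not vary with $x$ at all. With that single observation, $\lambda_1$ and $\lambda_2$ become $(1\mp\log^{-2}x)$ divided by fixed constants, and (1)(ii), (2)(ii) follow in one line --- this is what the paper does, and it sidesteps the ``uniform control of the nested logarithm'' you flag as the main obstacle.
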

\begin{proof} $\newline $(1)(i). If we take, for instance, $x_1=5.022$, then some $A_1$ can be chosen where
\begin{equation*}
\begin{split}
\log^2(x)(1-\lambda_1(x)) &\geqslant  0.9410 \log^2(x)+0.0590 \\
&>A_1>0
\end{split}
\end{equation*}
because when $K$ is sufficiently small, 
$$\frac{1}{1+\frac{1}{2\log^2\left(0.8576\alpha(x, 1; p_n) \log x\right)}}>0.0590,\hspace{0.5cm} \mbox{$x>5.022.$}$$
(1)(ii). If $\alpha(x, 1; p_n) \nsim 1$, then $p_n$ can be fixed. Thus, any constant
$$\tilde{A_2}, A_2< \frac{1}{1+\frac{1}{2\log^2 \left(0.8576\log p_n\# \right)}}$$
can be chosen for some $3.209<p_n\# \leqslant  (x-g)/K$.$\newline $
(2)(i). By the series $\sum_{j=0}^{\infty}x^j(1-x)^{-1}$, we have
\begin{equation*}
\begin{split}
\lambda_2(x)&=\left(1+\frac{1}{2\log^2(x)}\right) \sum_{j\geqslant  0} \log^{-2j}\left(0.8576\alpha(x, 1; p_n) \log x\right)\\
&=1+O\left( \log^{-2}\left(0.8576\alpha(x, 1; p_n) \log x\right)\right) \\
&<1+\frac{A_2}{\log^2( \log x)}, \hspace{0.3cm} A_2>0.
\end{split}
\end{equation*}
(2)(ii). If $\alpha(x, 1; p_n) \nsim 1$, then $p_n$ can be fixed. Thus, any constant
$$\tilde{A_4}, A_4> \frac{1}{1-\frac{1}{\log^2 \left(0.8576\log p_n\# \right)}}$$
can be chosen for some $3.209<p_n\# \leqslant  (x-g)/K$.
\end{proof}
Theorem \ref{U} is proved now. 
\begin{proof}
By the above Lemma, 
\begin{equation} \label{int4}
\begin{split}
\frac{\theta_k\alpha(x, k; p_n) }{[(1+\varphi(x))\pi(x)]^k}&\left(1-\frac{A_1}{\log^2(x)}\right)<L_k(p_n; n) \\
&<\frac{\theta_k\alpha(x, k; p_n) }{[(1-\varphi(x))\pi(x)]^k}\left(1+\frac{A_2}{\log^2( \log x)}\right), \hspace{0.3cm} \mbox{$x>x_1, x_2.$}
\end{split}
\end{equation}
Without proof, $\lim_{x \to \infty} (1\pm \varphi(x))^{-k} =1$, and from the definition of $\varphi(x)$ and Lemma \ref{intL1}, we find
\begin{equation*}
\begin{split}
&\lim_{x \to \infty} \left[\left(\frac{1}{1+\varphi(x)}\right)^k\left(1-\frac{A_1}{\log^2(x)}\right)\right]=1, \\
&\lim_{x \to \infty} \left[\left(\frac{1}{1-\varphi(x)}\right)^k\left(1+\frac{A_2}{\log^2( \log x)}\right)\right] =1.
\end{split}
\end{equation*}
Hence, for every $\varepsilon \in (0, 1)$, there exists a $\tilde{x}>x_1, x_2$ such that 
\begin{equation*}
\left.
\begin{split}
1-\varepsilon<\left(\frac{1}{1+\varphi(x)}\right)^k\left(1-\frac{A_1}{\log^2(x)}\right)<1+\varepsilon& \\
1-\varepsilon<\left(\frac{1}{1-\varphi(x)}\right)^k\left(1+\frac{A_2}{\log^2( \log x)}\right)<1+\varepsilon&
\end{split}
\right\} \forall x>\tilde{x}.
\end{equation*} 
Therefore,
$$1-\varepsilon<\frac{L_k(p_n; n)}{\theta_k\alpha(x, k; p_n)/\pi(x)^k}<1+\varepsilon, \hspace{0.3cm} \mbox{$\forall x>\tilde{x}$,}$$
which implies that
$$\lim_{x \to \infty}\frac{L_k(p_n; n)}{\theta_k\alpha(x, k; p_n)/\pi(x)^k}=1.$$
It is is easy to find $\theta_k$. Straightforwardly, if $k=1$, then $\theta_1=c^{-1}$. Albeit, more complications occur when $k=2$. To start, 
\begin{equation*}
\prod_{\substack{p \leqslant  x^c \\ p > 2}} \frac{p-2}{p}=\prod\limits_{\substack{p \leqslant  x^c\\ p>2}}\frac{p(p-2)}{(p-1)^2}\prod_{\substack{p \leqslant  x^c \\ p > 2}} \bigg(\frac{p-1}{p}\bigg)^2.
\end{equation*}
Therefore, 
\begin{equation*} \label{M}
\theta_2=\frac{1}{c^2} \prod\limits_{\substack{p \leqslant  x^c \\ p>2}}\frac{p(p-2)}{(p-1)^2} \bigg(\prod\limits_{\substack{p \leqslant  p_n \\ p>2}}\frac{p(p-2)}{(p-1)^2}\bigg)^{-1}.
\end{equation*}
The notation set-forth in 1923 by Hardy and Littlewood \cite{8} describes
\begin{equation*}\prod_{\substack{p \leqslant  \sqrt{x} \\ p>2}}\frac{p(p-2)}{(p-1)^2} =\Pi_2, \hspace{0.5cm} \mbox{as }  x \to \infty,
\end{equation*}
where $\Pi_2\approx0.66016$ is known as the twin prime constant; Wrench \cite{15} truncated this constant at $42$ digits. Accordingly,  $ \theta_2 \to c^{-2}$ when both $x, p_n  \gg 1$. In any case, we have $(15/16)<\theta_2\leqslant  c^{-2}$ when considering all primorials $p_n\#$, for $n>1$.

A lot of literature uses $c=1/2$ so that Eq. \eqref{mer} is a maximum for a given $x$. But, there are other common values for $c$. For instance, P\'{o}lya \cite{11} suggests that $c=e^{-\gamma}$ to maintain consistency with the PNT, and we can always take $c=1$. Nevertheless, for the sake of generality, we allow $c$ to be chosen from the interval $[1, 1/2]$, which includes $e^{-\gamma}$. We say $(15/16)^{k-1}<\theta_k \leqslant  2^{k}.$
\end{proof}
Corollary \ref{cor1} follows without proof from Theorem \ref{U} and Lemma \ref{intL1}.
\begin{corollary} \label{cor1}
Consider $x=Kp_n\#+g$ and suppose $\alpha(x, k ;p_n) \nsim 1$. We have:
\begin{enumerate}
\item $\tilde{A_3}\theta_k\alpha(x, k ;p_n)n^{-k} \leqslant  L_k(p_n; n, x) \leqslant  \tilde{A_4}\theta_k\alpha(x, k ;p_n)n^{-k};$
\item $ L_k(p_n; n, x) \to \theta_k\alpha(x, k ;p_n)n^{-k}, \hspace{0.5cm} \mbox{$p_n\# \to \infty, p_n\# \leqslant  (x-g)/K$}$.
\end{enumerate}
\end{corollary}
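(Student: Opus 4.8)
The plan is to read both items straight off the two-sided estimate \eqref{int3}, feeding in the $\alpha(x,1;p_n)\nsim1$ branches of Lemma \ref{intL1} in place of the $\alpha\sim1$ branches that powered the proof of Theorem \ref{U}. Recall that \eqref{int3} asserts
\[
\frac{\theta_k\alpha(x,k;p_n)}{[(1+\varphi(x))\pi(x)]^k}\,\lambda_1(x)<L_k(p_n;n,x)<\frac{\theta_k\alpha(x,k;p_n)}{[(1-\varphi(x))\pi(x)]^k}\,\lambda_2(x),
\]
and that, exactly as in the proof of Theorem \ref{U}, the factor $\pi(\cdot)^{-k}$ is turned into $n^{-k}$ through $\pi(p_n)=n$ together with Lemma \ref{T2} and \eqref{int1}. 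Thus everything reduces to controlling the four $x$-dependent multipliers $(1\pm\varphi)^{-k}$, $\lambda_1$, and $\lambda_2$.

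For item (1), I would apply parts (1)(ii) and (2)(ii) of Lemma \ref{intL1}, which in the regime $\alpha\nsim1$ give $\lambda_1(x)>\tilde{A_2}-A_2/\log^2(x)$ and $\lambda_2(x)<\tilde{A_4}+A_4/\log^2(x)$ for suitable positive constants. The structural point, already recorded inside the proof of Lemma \ref{intL1}, is that $\alpha\nsim1$ forces $p_n$ --- and hence $n$ and $\varphi$ --- to stay fixed, so the correction $-A_2/\log^2(x)$ and the factors $(1\pm\varphi)^{-k}$ are pinched between positive constants on the admissible range $3.209<p_n\#\le(x-g)/K$. Folding $(1+\varphi)^{-k}$ and the lower correction into one constant $\tilde{A_3}>0$, and $(1-\varphi)^{-k}$ and the upper correction into $\tilde{A_4}>0$, delivers precisely $\tilde{A_3}\,\theta_k\alpha\, n^{-k}\le L_k(p_n;n,x)\le \tilde{A_4}\,\theta_k\alpha\, n^{-k}$. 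One should only insist that $p_n\ge599$ so that Lemma \ref{T2} and the definition of $\varphi$ are in force, checking the finitely many smaller $n$ by hand.

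For item (2), I would rerun the squeeze that closes the proof of Theorem \ref{U}, now driving $p_n\#\to\infty$. As $p_n\to\infty$ we have $\varphi\to0$, hence $(1\pm\varphi)^{-k}\to1$; the explicit corrections $A_1/\log^2(x)$ and $A_2/\log^2(\log x)$ vanish; and, decisively, the constants from Lemma \ref{intL1}(ii) tend to $1$, since $\tilde{A_2}$ and $\tilde{A_4}$ are bounded by $(1+(2\log^2(0.8576\log p_n\#))^{-1})^{-1}$ and $(1-(\log^2(0.8576\log p_n\#))^{-1})^{-1}$ respectively, both of which converge to $1$ as their arguments diverge. Consequently the lower and upper constants of item (1) both approach $1$, and the squeeze theorem yields $L_k(p_n;n,x)\to\theta_k\alpha(x,k;p_n)\,n^{-k}$.

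The one genuine subtlety --- and the step I expect to be the main obstacle --- is reconciling ``$\alpha\nsim1$'' with ``$p_n\#\to\infty$'' in item (2). For a fixed $K$ one has $\log x=\log K+\log p_n\#$, so $\alpha\to1$ automatically and the hypothesis of the corollary would be violated in the limit; to keep $\alpha$ bounded away from $1$ one must let $K=K(n)$ grow with $\log K(n)$ comparable to $\log p_n\#$ (for example $K(n)=(p_n\#)^\beta$, which fixes $\alpha=(1+\beta)^{-k}$). I would make this scaling explicit and verify that along such a sequence the window $3.209<p_n\#\le(x-g)/K$ still lets $\varphi$, the logarithmic corrections, and $\tilde{A_2},\tilde{A_4}$ all reach their limits simultaneously. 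Once that compatibility is pinned down, both items drop out of \eqref{int3} and Lemma \ref{intL1}, which is why the corollary is recorded without a standalone proof.
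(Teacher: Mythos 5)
Your reconstruction is correct and is exactly the derivation the paper intends: Corollary \ref{cor1} is stated ``without proof'' precisely because it is Eq.~\eqref{int3} combined with the $\alpha(x,1;p_n)\nsim 1$ branches (1)(ii) and (2)(ii) of Lemma \ref{intL1}, with $[(1\pm\varphi)\pi]^{-k}$ converted to constant multiples of $n^{-k}$ via $\pi(p_n)=n$, Lemma \ref{T2}, and Eq.~\eqref{int1} --- which is what you do, folding the pinched factors into $\tilde{A_3},\tilde{A_4}$ for item (1) and letting the explicit bounds on $\tilde{A_2},\tilde{A_4}$ tend to $1$ for item (2). Your closing observation is also well taken and goes beyond the paper: the proof of Lemma \ref{intL1} treats $\alpha\nsim 1$ as ``$p_n$ can be fixed,'' which is in tension with the limit $p_n\#\to\infty$ in item (2), and your prescription that $K=K(n)$ grow with $\log K(n)$ comparable to $\log p_n\#$ (e.g.\ $K=(p_n\#)^{\beta}$, keeping $\alpha$ bounded away from $1$ while the constants converge to $1$ uniformly) supplies the compatibility that the paper leaves implicit.
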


\subsection{Proof of Theorem \ref{Aa}}
Several bounds for Mertens' Theorem are provided now. In 2016, Dusart \cite{16} sharpened the unconditional bounds of Rosser and Schoenfeld (see \cite[Theorem 8, p.g. 73]{13}) to
\begin{equation} \label{dus}
\frac{e^{-\gamma}}{\log x}\left(1-\frac{1}{5(\log x)^3}\right)<M^*(x)<\frac{e^{-\gamma}}{\log x}\left(1+\frac{1}{5(\log x)^3}\right)
\end{equation}
for $x\geqslant  2\ 278\ 382$. Tighter bounds are currently given by Axler \cite{17}:
\begin{equation}\label{axl}
M^*(x)>\frac{e^{-\gamma}}{\log x}\left(1-\frac{1}{20(\log x)^3}-\frac{3}{16(\log x)^4}\right),
\end{equation}
which holds for $x>46\ 909\ 038,$ and
\begin{equation}\label{axl2}
M^*(x)<\frac{e^{-\gamma}}{\log x}\left(1+\frac{1}{20(\log x)^3}+\frac{3}{16(\log x)^4}+\frac{1.02}{(x-1)\log x}\right), 
\end{equation}
for $x>1$. Proof of Theorem \ref{Aa} follows.
\begin{proof}

Let $x=p_n\#\pm1$, and take $p_n\#>8\cdot10^{989\ 079}$ because $(p_{168\ 064}=2\ 278\ 379)\#<8\cdot10^{989\ 079}<(p_{168\ 065}=2\ 278\ 421)\#$. Simply, 
\begin{equation*}
\begin{split}
1-4.233\cdot10^{-21}<\frac{\log x}{e^{-\gamma}}M^*&(x)<1+4.233\cdot10^{-21},\\
1-6.375\cdot10^{-5}<\frac{\log p_n}{e^{-\gamma}}M^*&(p_n)<1+6.375\cdot10^{-5},
\end{split}
\end{equation*}
where the top lower bound is found by Eq. \eqref{axl}, the bottom lower bound by Eq. \eqref{dus}, and both upper bounds by Eq. \eqref{axl2}. We take
$\alpha(x, k; p_n) =1$, and 
$$
(1-6.42\cdot10^{-5})^k \frac{\theta_k}{n^k}< L_k(p_n; n)<(1+6.42\cdot10^{-5})^k \frac{\theta_k}{n^k}
$$
follows from Eq. \eqref{int1} because
\begin{equation*}\label{s5}
\begin{split}
\varphi(8\cdot 10^{989\ 079})&=\left|(1-4.39\cdot10^{-7})\left(1-\frac{9.433}{e^{593.984}}\right)-1\right|\\
&\approx 4.39\cdot10^{-7}.
\end{split}
\end{equation*}
 Theorem \ref{Aa} follows easily. 
\end{proof}

\subsection{Proof of Theorem \ref{denns}}
\begin{proof}
Let 
$$
\mathcal{\overline{P}}=\{2\} \cup \{\mbox{some, but not all, primes $p \leqslant  \sqrt{\overline{x}}$}\}.$$
For $K_1 \in \mathbb{N^+}$, we can always write
\begin{equation}
\overline{x}=\left(K_1\prod_{p \in \mathcal{\overline{P}}}p\right) \pm1.
\end{equation}
Let $|\mathcal{P}_{x}|=|\mathcal{P}_{\overline{x}}|=d$. Then
\begin{equation} \label{pol2}
\left( \prod\limits_{p \leqslant  p_n} \frac{p}{p-k} -\prod\limits_{p \in \mathcal{\overline{P}}}\frac{p}{p-k}\right)\prod_{p \leqslant  p_d} \frac{p-k}{p} >0
\end{equation}
for all $x$ and qualifying $\overline{x}$ if and only if Theorem \ref{denns} holds. 
Equation \eqref{pol2} is easily simplified because $\mathcal{P}_{x}$ and $\mathcal{P}_{\overline{x}}$ only differ by $(\mathcal{P}_{x} \setminus \mathcal{\overline{P}}) \cup (\mathcal{\overline{P}} \setminus \mathcal{P}_{x})$. Define the sets
\begin{equation*}
\begin{split}
&\mathcal{P}_{x} \setminus \mathcal{\overline{P}}=\{p_a, p_b,...,p_c\}, \\
&\mathcal{\overline{P}} \setminus \mathcal{P}_{x}=\{p_d, p_e,...,p_f\}, \\
&\mathcal{P}_{x} \cap \mathcal{\overline{P}}=\{p_g, p_h,...,p_i\}.
\end{split}
\end{equation*}
We find that
\begin{equation*}\label{bs2}
\prod\limits_{j=a,b,...,c} \frac{p_j}{p_j-k}-\prod\limits_{s=d,e,...,f} \frac{p_s}{p_s-k}
\end{equation*}
will always have the same sign as Eq. \eqref{pol2}. We have 
\begin{equation*} \label{bs3}
 \prod_{l=g,h,...,i} \prod_{j=a,b,...,c} p_jp_l=TK_1 \prod_{l=g,h,...,i} \prod_{s=d,e,...,f} p_sp_l.
\end{equation*}
because $x=T\overline{x}$, for $1/4< T < 4$, as a condition for $\mathcal{P}_{x}=\mathcal{P}_{\overline{x}}$ and the term $\pm1$ in $\overline{x}$ can be eliminated by simply choosing the term $\pm1$ in $x$ to have the same sign; the latter simplification being justified because $L_k(p_n\#-1)=L_k(p_n\#+1)$ by definition.
Hence,
\begin{equation*} \label{bs4}
\prod\limits_{j=a,b,...,c} \frac{1}{p_j-k}-\frac{1}{TK_1}\prod\limits_{s=d,e,...,f} \frac{1}{p_s-k}
\end{equation*}
will also have the same sign as Eq. \eqref{pol2}. Take
\begin{equation*}TK_1= \prod_{y=1}^{|\mathcal{\overline{P}} \setminus \mathcal{P}_{x}|} z_y,
\end{equation*}
and it goes that all $z_y$ are positive constants such that the product of a particular combination of $p_j$ equals the product of only some particular $p_s$ and some particular $z_y$. No matter how we choose to arrange this, there is always exactly one $z_y$ for every $p_s$ because $z_y$ is arbitrary. There are two cases to consider: $|\mathcal{\overline{P}}|=n$, call this Case (1), and $|\mathcal{\overline{P}}|<n$, call this Case (2). If $|\mathcal{\overline{P}}|>n$, then $\mathcal{P}_{x} \neq \mathcal{P}_{\overline{x}}$ (particularly $|\mathcal{P}_{x}| < |\mathcal{P}_{\overline{x}}|$), which contradicts $\mathcal{P}_{x} = \mathcal{P}_{\overline{x}}$. Each case is considered now.

Case (1). If $|\mathcal{\overline{P}}|=n$, then there is exactly one $p_j$ for every $p_s$ and $z_y$ pair. Any $p_j$ and $p_s$ can be found such that $p_j\leqslant  p_s$ and $p_j=z_yp_s$, implying that
\begin{equation} \label{bs1}
(p_j-k)\leqslant  z_y(p_s-k)
\end{equation}
because $z_y \leqslant  1, \forall y.$ It must also be true that $z_y <1$ for at least one $y$ or else $\overline{x}$ contradicts its definition.
Because $j$, $s$, and $y$ are arbitrary, Eq. \eqref{bs1} holds for all such values. Therefore, Eq. \eqref{pol2} maintains the sign $>0$ always for the case that $|\mathcal{\overline{P}}|=n$.

Case (2). If $|\mathcal{\overline{P}}|<n$, then there is exactly one $p_j$ for exactly $|\mathcal{\overline{P}}|-1$ pairs of $p_s$ and $z_y$. For these $|\mathcal{\overline{P}}|-1$ pairs, any $p_j$ and $p_s$ can be found such that $p_j\leqslant  p_s$ and $p_j=z_yp_s$. Case (1) proves that Eq. \eqref{bs1} holds for these such values. That leaves one remaining $p_s$ and $z_y$ value each to consider, which we denote by $p_s^{\prime}$ and $z_y^{\prime}$, and $n-|\mathcal{\overline{P}}|+1$ values of $p_j$ left to consider. Let $C_0, C_1, ..., C_{n-|\mathcal{\overline{P}}|}$ denote the $n-|\mathcal{\overline{P}}|+1$ values of $p_j$, and let $U, V, W \in \mathbb{N^+}$ for $1 \leqslant  U,V, W \leqslant  n-|\mathcal{\overline{P}}|$ and $U < V$. In general, 
$$\prod_{i=0}^U (C_i-k) > \prod_{i=0}^V (C_i-k).$$
Thus, if
$$\prod_{i=0}^1 (C_i-k) < z_y^{\prime}(p_s^{\prime}-k),$$ then $$\prod_{i=0}^W (C_i-k) < z_y^{\prime}(p_s^{\prime}-k).$$ Since $C_0C_1=z_y^{\prime}p_s^{\prime}$, we have
\begin{equation*}
\begin{split}
\prod_{i=0}^1 (C_i-k)-z_y^{\prime}(p_s^{\prime}-k) &= (C_0-k)(C_1-k)-z_y^{\prime}(p_s^{\prime}-k) \\
&=-C_0-C_1+k+z_y^{\prime}.
\end{split}
\end{equation*}
 Multiplying through by $p_s^{\prime}$ yields $-C_0p_s^{\prime}-C_1p_s^{\prime}+k p_s^{\prime}+C_0C_1=-C_0p_s^{\prime}+k p_s^{\prime}+C_1(C_0-p_s^{\prime}).$ But, $C_0<p_s^{\prime}$ and $C_0>k$, so $-C_0p_s^{\prime}+k p_s^{\prime}+C_1(C_0-p_s^{\prime})<0$. Thus, $(C_0-k)(C_1-k)<z_y^{\prime}(p_s^{\prime}-k)$, implying that Eq. \eqref{pol2} maintains the sign $>0$ always for the case that $|\mathcal{\overline{P}}|<n$. Therefore, Theorem \ref{denns} is proved.
\end{proof}

\section{Conjectures on Primes and Twin Prime Pairs}
Caldwell and Gallot \cite{3} note that the probability that some number $p_n\# \pm 1$ is prime can be estimated heuristically by dividing $(\log (p_n\# \pm 1))^{-1} \approxeq 1/p_n$ by $M^*(p_n) \sim e^{-\gamma}/ \log p_n$ to yield $e^{\gamma}\log p_n/ p_n$. Then, the expected number of primorial primes of each of the forms $p_n\# \pm 1$ less than or equal to $p_N\#\pm1$, respectively, is given by
\begin{equation} \label{cad}
\sum_{p\leqslant  p_N} \frac{e^{\gamma}\log p}{ p} \sim e^{\gamma}\log p_N.
\end{equation}
(Multiplying this result by two yields the total expected number of primorial primes less than or equal to $p_N\#+1$.) This result motivates their conjecture below.
\begin{conjecture} [\bfseries Caldwell and Gallot]\label{cn2}
The expected number of primorial primes of each of the forms $p_n\# \pm 1$ less than or equal to $p_N\# \pm 1$, respectively, are both approximately $e^{\gamma}\log p_N$. 
\end{conjecture}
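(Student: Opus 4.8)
The plan is to justify Conjecture \ref{cn2} heuristically, treating $L_1(p_n; n)$ as the expected value of the indicator that a number of the prescribed form is prime, and then summing over $n \leqslant N$ by linearity of expectation. First I would pin down the per-term probability. By the footnote convention, $L_1(p_n; n)$ is the probability that $p_n\#+1$ (equivalently, by symmetry, $p_n\#-1$) is prime, and Eq. \eqref{Er} expresses it as the ratio $M^*(x^c)/M^*(p_n)$ with $x = p_n\#\pm1$. Taking the P\'{o}lya-consistent value $c = e^{-\gamma}$ and applying Mertens' Theorem (Eq. \eqref{mer}) to both factors gives $L_1(p_n; n) \sim \log p_n/(c\log x)$; since $\log x \approx \vartheta(p_n) \sim p_n$ and $\pi(p_n) = n \sim p_n/\log p_n$, this collapses to the Caldwell--Gallot estimate $L_1(p_n; n) \sim e^{\gamma}\log p_n/p_n$. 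This is exactly the $k=1$ case of Theorem \ref{U} with $\theta_1 = c^{-1} = e^{\gamma}$.

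Next I would sum the per-term probabilities. By linearity of expectation, the expected number of primes of the form $p_n\#+1$ with $n \leqslant N$ is
\begin{equation*}
\sum_{n=1}^{N} L_1(p_n; n) \sim \sum_{p \leqslant p_N} \frac{e^{\gamma}\log p}{p},
\end{equation*}
which is precisely Eq. \eqref{cad}. The final analytic step invokes Mertens' first theorem, $\sum_{p \leqslant x} (\log p)/p = \log x + O(1)$, so that the right-hand side equals $e^{\gamma}(\log p_N + O(1)) \sim e^{\gamma}\log p_N$. Because $L_1(p_n\#-1) = L_1(p_n\#+1)$ (as already used in the proof of Theorem \ref{denns}), the identical asymptotic holds for the $-1$ form, giving $e^{\gamma}\log p_N$ for each form separately and $2e^{\gamma}\log p_N$ in total.

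The hard part is not analytic but conceptual: $L_1(p_n; n)$ is a heuristic ``probability'' rather than a genuine expectation, and the primality events across different $n$ are neither independent nor actually random. Consequently, replacing the indicator of primality by $L_1(p_n; n)$ and invoking linearity of expectation is a modeling assumption that cannot be made rigorous with current techniques — this is why the statement is advanced as a conjecture rather than a theorem, in the same spirit as the Twin Prime Conjecture. The subsidiary estimates (Mertens' first and third theorems, the Prime Number Theorem in the form $\pi(p_n)=n \sim p_n/\log p_n$, and $\vartheta(p_n)\sim p_n$) are all rigorous, so the entire uncertainty is isolated in the single step identifying $L_1(p_n; n)$ with the expected primality indicator; any rigorous proof would have to control the error in that identification summed over $n \leqslant N$, which is the genuine obstacle.
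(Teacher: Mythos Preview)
Your heuristic justification is sound, and you correctly isolate the non-rigorous step (treating $L_1(p_n;n)$ as a genuine expectation) as the reason this is a conjecture rather than a theorem. But your route differs from the paper's. The paper does not derive Caldwell and Gallot's constant through the $L_1$ machinery; it simply reports their original heuristic, which takes the PNT probability $1/\log(p_n\#\pm1)\approxeq 1/p_n$ directly and divides by $M^*(p_n)\sim e^{-\gamma}/\log p_n$ to get $e^{\gamma}\log p_n/p_n$, then sums via Mertens' first theorem. You instead pass through Eq.~\eqref{Er} and Theorem~\ref{U} and force $\theta_1=e^{\gamma}$ by committing to the P\'{o}lya value $c=e^{-\gamma}$. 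The paper itself makes exactly this observation---that Caldwell--Gallot's estimate agrees with $L_1(p_n;n)\sim\theta_1/n$ only when $c=e^{-\gamma}$---but draws the opposite conclusion: rather than adopting $c=e^{-\gamma}$ to validate the conjecture, it proposes \emph{revising} the conjecture to $\theta_1\log p_N$ with $\theta_1\in[1,2]$, noting that the most accepted choice $c=1/2$ gives $\theta_1=2$, not $e^{\gamma}$. So your approach buys internal consistency with the paper's framework at the price of a specific, non-canonical choice of $c$, whereas the paper keeps the Caldwell--Gallot heuristic at arm's length and then critiques its constant.
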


Theorem \ref{U} shows that their estimate $e^{\gamma}\log p_n/ p_n$, and thus Eq. \eqref{cad}, is a $\theta_1e^{-\gamma}$ term off when $p_n\gg1;$ the only exception being that their estimate obtains equality with Eq. \eqref{asim} if $c=e^{-\gamma}$ (P\'{o}lya's bound). Therefore, we suggest that $e^{\gamma}\log p_N$ in Conjecture \eqref{cn2} be revised to be $\theta_1\log p_N$ where $\theta_1$ is a constant in the interval $[1, 2]$, as defined previously. Table 1 provides the actual and expected number of primorial primes less than or equal to $p_N\#+1$. Note that the third column in Table 1 produces an interval of values that is consistent with the possible values of $\theta_1$.
\begin{table}[hbt!]
\centering
\caption{Actual and Expected number of primorial primes less than or equal to $p_N\#+1$.}
 \begin{tabular}{|c|c|c|c|} 
 \hline
 $N$ & Actual & Expected $(2\theta_1\log p_N)$ & Expected $(2e^{\gamma}\log p_N)$\\ [0.5ex] 
 \hline
$10$ & $9$ &$[6.74, 13.47]$ & $12$\\ 
$100$ &  $15$  &$[12.58, 25.17]$ & $22.40$\\
$1,000$ &  $29$  &$[17.96, 35.90]$ & $31.98$\\
$10,000$ &  $37$ &$[23.12, 46.24]$ & $41.12$\\
$100,000$ &  $\geqslant 42$ &$[28.16, 56.31]$ & $50.14$ \\ [1ex] 
 \hline
\end{tabular}
\end{table}
Although this data is rather limited, it is successful in supporting Conjecture \ref{cn2} and maintaining our suggested revision. That being said, the largest value in the interval is currently the most accepted because it is consistent with taking $c=1/2$.

Similarly, the expected number of primorial twin prime pairs less than or equal to the pair $(p_N\#-1, p_N\#+1)$ is 
\begin{equation} \label{ncad}
\sum_{p\leqslant  p_N} \theta_2\bigg(\frac{\log p}{ p} \bigg)^2.
\end{equation}
Immediately, we notice that Eq. \eqref{ncad} converges to a constant, say $\theta_2\Omega$, as $N \to \infty$. 

Elementary bounds for $\Omega$ are found now. For $x \geqslant  17$, the following simple inequality holds \cite{13}: 
\begin{equation*}
0.796775\frac{\log x}{ x} < \frac{1}{\pi(x) }< \frac{\log x}{x}.
\end{equation*} We find
$$ \sum_{p\leqslant  (p_{17}=59)} \bigg(\frac{\log p}{ p} \bigg)^2 \approx 0.660163 $$
and
$$\sum_{n > 17} \frac{1}{n^2}\approx 0.057134.$$
Hence, an elementary bounds for $\Omega$ follows:
$$
0.717297<\Omega<0.750159.
$$
Let $\theta_2=4$. Then, the total expected number of primorial twin prime pairs is in the interval $(2.8692, 3.0064)$. By taking this number to be three--the only integer in the interval--Conjecture \ref{cn1} is initiated, with more justification to come. Computer programs are used to estimate the convergences of $\Omega$, and Table 2 shows various values of $\Omega$ for a given $N$.

\begin{table}[hbt!]
\centering
\caption{Evaluation of $\Omega$ for a given $N$.}
 \begin{tabular}{|c|c|} 
 \hline
 $N$ & Value of $\Omega$ \\ [0.5ex] 
 \hline
$10$ & $0.605414$ \\ 
$100$ & $0.728261$\\
$1,000$ & $0.740344$\\
$10,000$ & $0.741478$\\
$100,000$ & $0.741586$ \\ [1ex] 
 \hline
\end{tabular}
\end{table}

\begin{table}[hbt!]
\centering
\caption{Actual and Expected number of primorial twin prime pairs less than or equal to the pair $(p_N\#-1, p_N\#+1)$.}
 \begin{tabular}{|c|c|c|} 
 \hline
 $N$ & Actual & Expected  \\ [0.5ex] 
 \hline
$10$ &  $3$ &$2.42$ \\ 
$100$ &  $3$ &$2.91$ \\
$1,000$ & $3$ &$2.96$ \\
$10,000$ & $3$ &$2.97$ \\
$100,000$ &  $3$ &$2.97$ \\ [1ex] 
 \hline
\end{tabular}
\end{table}

The various values of $\Omega$ in Table 2 are used to find the expected number of primorial twin prime pairs less than or equal to the pair $(p_N\#-1, p_N\#+1)$. This data is provided in Table 3 along with the actual number. We find that this data is successful in supporting Conjecture \ref{cn1}. The three primorial twin prime pairs are given by $n=2,3,5$.

Similar results are obtained for universal primorials. Let $x$ be a large universal primorial. We have 
$$L_2(p_n;n)=\theta_2\left(\frac{\log p_n}{ \log x }\right)^2\approxeq \theta_2\left(\frac{\log p_n}{\vartheta(p_n)+\log (N/2)}\right)^2.$$ Since $N>(2+\log (N/2))^2$ for $N > 18$ (the exact value is $17.262$), we find that the expected number of instances where $x$ and $x+2$ are both prime is 
\begin{equation*}\label{split}
\begin{split}
\approxeq \theta_2\sum_{p}\sum_{N=1}^{\infty}\bigg(\frac{\log p}{p+\log (N/2)}\bigg)^2 &> \sum_{N=1}^{\infty} \bigg(\frac{\log 2}{2+\log (N/2)}\bigg)^2\\
& >1.0659+\sum_{N=19}^{\infty} \frac{\log^2(2)}{N}, \\
 \end{split}
\end{equation*}
which diverges by the harmonic series. Therefore, Conjecture \ref{up} is justified.

\section{Proof of Brun's Theorem}
Brun \cite{18} provides a natural upper bound on the number of twin primes less than or at a given number $x$, which is commonly referred to as Brun's Theorem. 
\begin{theorem} [\bfseries Brun]For $x \geqslant  3$,
\begin{equation} \label{brun}
\pi_2(x)=O\bigg( \frac{x(\log\log x)^2}{\log^2(x)}\bigg).
\end{equation}
\end{theorem}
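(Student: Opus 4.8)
The plan is to prove Brun's upper bound by his \emph{pure sieve}: truncate the inclusion--exclusion (Legendre) sieve at a fixed even length and optimize the sieving level. First I would set up the sieve. Put $\mathcal{A}=\{n(n+2): 1\leqslant n\leqslant x\}$ and sieve by the primes $p\leqslant z$, where the cutoff $z$ is to be chosen. For each prime let $\rho(p)$ be the number of residues $n\bmod p$ with $p\mid n(n+2)$, so that $\rho(2)=1$ and $\rho(p)=2$ for $p>2$; extend $\rho$ multiplicatively to squarefree $d$, and let $\mathcal{A}_d$ be the subset of $\mathcal{A}$ arising from $n$ with $d\mid n(n+2)$, so $|\mathcal{A}_d|=\rho(d)x/d+O(\rho(d))$. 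If $S(x,z)$ counts the $n\leqslant x$ with $n(n+2)$ coprime to $P(z)=\prod_{p\leqslant z}p$, then every twin prime $p$ with $z<p\leqslant x$ yields a surviving $n=p$, whence
$$\pi_2(x)\leqslant S(x,z)+z,$$
and it suffices to bound $S(x,z)$.

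Next I would invoke the Bonferroni inequalities. Since the indicator of survival is $\sum_{d\mid\gcd(n(n+2),P(z))}\mu(d)$, truncating after an even number $2m$ of prime factors gives an upper bound, so
$$S(x,z)\leqslant\sum_{\substack{d\mid P(z)\\ \nu(d)\leqslant 2m}}\mu(d)\,|\mathcal{A}_d|\leqslant x\!\!\sum_{\substack{d\mid P(z)\\ \nu(d)\leqslant 2m}}\frac{\mu(d)\rho(d)}{d}+\sum_{\substack{d\mid P(z)\\ \nu(d)\leqslant 2m}}\rho(d),$$
where $\nu(d)$ counts the prime factors of $d$. Comparing the truncated main sum with the full product $W=\prod_{p\leqslant z}\bigl(1-\rho(p)/p\bigr)$, the omitted tail is at most $\sum_{j>2m}T^{j}/j!$ with $T=\sum_{p\leqslant z}\rho(p)/p$, while the remainder sum is at most $\sum_{j\leqslant 2m}2^{j}\binom{\pi(z)}{j}\leqslant(2\pi(z))^{2m}$. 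Both key estimates come from Mertens' Theorem \eqref{mer}: writing $\prod_{2<p\leqslant z}(1-2/p)=\prod_{2<p\leqslant z}(1-1/p)^{2}\cdot\prod_{2<p\leqslant z}\frac{1-2/p}{(1-1/p)^{2}}$, the last product converges to a constant built from the twin-prime constant $\Pi_2$, so $W\asymp(\log z)^{-2}$; and $T=2\sum_{p\leqslant z}1/p+O(1)=2\log\log z+O(1)$. Hence $S(x,z)\leqslant xW+x\sum_{j>2m}T^{j}/j!+(2\pi(z))^{2m}$ with $xW\asymp x/\log^{2}z$.

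Finally I would balance the three terms. Taking $2m$ to be a fixed multiple of $T\asymp\log\log z$ large enough that $\bigl(eT/(2m)\bigr)^{2m}$ is dominated by $W$ controls the truncation tail; this forces the remainder $(2\pi(z))^{2m}$ to have size $z^{O(\log\log z)}$, so to keep it below $x/\log^{2}z$ one is driven to choose $z$ with $\log z\asymp\log x/\log\log x$. For such $z$ one has $\log\log z\sim\log\log x$ and $z\leqslant\exp(\log x/\log\log x)=o(x/\log^{2}x)$, so the $+z$ term is negligible, while $W\asymp(\log z)^{-2}\asymp(\log\log x)^{2}/\log^{2}x$. Combining gives $\pi_2(x)\ll x(\log\log x)^{2}/\log^{2}x$, which is \eqref{brun}.

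The hard part is exactly this last optimization. There is a genuine tension: suppressing the truncation tail demands $2m$ proportional to $T\asymp\log\log z$, yet the remainder $(2\pi(z))^{2m}$ then grows like $z^{O(\log\log z)}$, so $z$ cannot be taken as large as a power of $x$ but only as large as $\exp(\log x/\log\log x)$. It is precisely this compromise — $z$ forced well below $\sqrt{x}$ — that injects the extra factor $(\log\log x)^{2}$ into the bound, and arranging the constants so that the tail bound and the remainder bound hold \emph{simultaneously} for a single admissible pair $(m,z)$ is the delicate step.
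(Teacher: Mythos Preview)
Your argument is correct; it is the classical Brun pure sieve, and the balancing you describe is exactly the standard one. The paper, however, derives \eqref{brun} by a completely different route that does not sieve at all. It invokes Theorem~\ref{denns} --- which says that among all integers sharing the same set $\mathcal{P}_x$ of primes below their square root, the numbers $p_n\#\pm1$ maximize the Mertens-type quantity $L_2$ --- together with Lemma~\ref{eq}, which identifies $\log p_n/\log x$ with $(\log\log x)/\log x$ when $x=p_n\#\pm1$; combining these, the paper bounds $\pi_2(x)$ by a constant times $\sum_{j\leqslant x}\bigl(\log\log j/\log j\bigr)^2$ and then pulls out the leading behaviour. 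In that derivation the factor $(\log\log x)^2$ drops out immediately from $L_2\asymp(\log\log x/\log x)^2$, whereas in your sieve it emerges only after the forced compromise $\log z\asymp\log x/\log\log x$. Your route is self-contained and rigorous in the classical sense; the paper's is much shorter and ties the result to the machinery developed earlier, but it rests on treating $L_2(j)$ as an upper bound for the indicator that $j,j+2$ are both prime --- a step that in standard analytic number theory is exactly what a sieve inequality such as yours is needed to justify.
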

Lemma \ref{eq} and Theorem \ref{denns} together supply an alternative proof to Brun's Theorem, the former being provided and proved quickly below. 
\begin{lemma} \label{eq}
Consider $x=Kp_n\#+g$ and suppose $\alpha(x, k; p_n) \sim 1$. We have
\begin{equation*}\label{gllg3}
\frac{\log p_n}{\log x}=\frac{\log\log x}{\log x}(1+o(1)).
\end{equation*}
\end{lemma}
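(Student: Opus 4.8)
The plan is to reduce the claim to the single asymptotic relation $\log\log x \sim \log p_n$, since multiplying this by $(\log x)^{-1}$ immediately yields the stated identity $\frac{\log p_n}{\log x}=\frac{\log\log x}{\log x}(1+o(1))$. Note also that the hypothesis is insensitive to $k$: because $\alpha(x,k;p_n)=\alpha(x,1;p_n)^{k}$ for $k\in\{1,2\}$, the assumption $\alpha(x,k;p_n)\sim 1$ is equivalent to $\alpha(x,1;p_n)\sim 1$, so I would work throughout with the latter. The entire task thus becomes showing $\log\log x \sim \log p_n$ as $x\to\infty$ (equivalently $n\to\infty$) under this hypothesis.

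First I would pin down the size of $\log x$. Writing $x=Kp_n\#+g$, we have $\log x = \log(Kp_n\#)+\log\!\left(1+\frac{g}{Kp_n\#}\right)=\log K+\vartheta(p_n)+o(1)$, using $\vartheta(p_n)=\log(p_n\#)$ by definition and the fact that $g/(Kp_n\#)\to 0$. The hypothesis $\alpha(x,1;p_n)=\log(p_n\#)/\log x\sim 1$ is precisely the statement $\log x\sim\vartheta(p_n)$; this is exactly where ``$K$ sufficiently small'' (Definition \ref{dd1}) does its work, guaranteeing that the $\log K$ contribution is negligible against $\vartheta(p_n)$.

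Next I would invoke the Prime Number Theorem in the form $\vartheta(x)\sim x$, used throughout the paper, to get $\vartheta(p_n)\sim p_n$, and hence $\log x\sim p_n$. Taking logarithms of this last relation, and using $\log x\to\infty$, gives $\log\log x=\log\!\big(p_n(1+o(1))\big)=\log p_n+o(1)=\log p_n\,(1+o(1))$, i.e. $\log\log x\sim\log p_n$, which combined with the opening reduction finishes the argument.

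The only step requiring genuine care is the passage from $\log x\sim p_n$ to $\log\log x\sim\log p_n$: an asymptotic equivalence is not in general preserved under exponentiation, but it \emph{is} preserved under taking logarithms provided both quantities diverge, which holds here since $p_n\to\infty$. This is the point I would spell out explicitly, writing $\log\log x-\log p_n=\log(1+o(1))=o(1)$ and dividing by the divergent quantity $\log p_n$; everything else is routine bookkeeping resting on the PNT and the definition of $\alpha$.
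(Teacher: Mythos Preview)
Your argument is correct and follows the same skeleton as the paper: use the hypothesis $\alpha\sim 1$ to get $\log x\sim\vartheta(p_n)$, invoke the PNT form $\vartheta(p_n)\sim p_n$ to obtain $\log x\sim p_n$, and then take logarithms (carefully) to conclude $\log\log x\sim\log p_n$. Your observation that the hypothesis is insensitive to $k$ and your explicit justification of the ``logs preserve $\sim$ when both sides diverge'' step are both clean.

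The paper's proof differs only in that it replaces the bare asymptotic $\vartheta(p_n)\sim p_n$ by Trudgian's explicit bound $|\vartheta(x)-x|<x\,\epsilon_0(x)$ for $x\ge 149$, which yields the two-sided inequality
\[
\frac{\log\log x}{\log x}\Bigl(1-\tfrac{0.1334}{\log\log x}\Bigr)<\frac{\log p_n}{\log x}<\frac{\log\log x}{\log x}\Bigl(1+\tfrac{0.15398}{\log\log x}\Bigr)
\]
(after evaluating $\epsilon_0$ at $599$). So the paper's version is quantitatively sharper, giving an explicit $O(1/\log\log x)$ error, whereas your purely asymptotic route is shorter and more self-contained but records only $o(1)$. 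For the stated lemma and its use in deriving Brun's bound in Section~5, your $o(1)$ version suffices.
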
 
\begin{proof}
Since $\alpha(x, k; p_n)\approxeq 1$ for large enough $x$, we have$$
\frac{\log\log x-\log(1+\epsilon_0(p_n))}{\log x}<\frac{\log p_n}{\log x} <\frac{\log\log x-\log(1-\epsilon_0(p_n))}{\log x}, $$
for $p_n \geqslant  149$. Take the bounds given by Dusart in Eq. \eqref{d1} so that $\epsilon_0(599)=0.14271$ and $\varphi(599)=0.30543$, yielding
$$\frac{\log\log x}{\log x}\left(1-\frac{0.1334}{\log \log x}\right)<\frac{\log p_n}{\log x} 
<\frac{\log\log x}{\log x}\left(1+\frac{0.15398}{\log \log x}\right).$$
Lemma \ref{eq} follows immediately. 
\end{proof}
The derivation of Brun's Theorem is shown now. Let $k=2$ and suppose that $\alpha(x, k; p_n) \sim 1$. By Lemma \ref{eq} and Theorem \ref{denns}, $$\pi_2(x)  \ll B_0\sum_{j=2}^{x}\left(\frac{\log\log j}{\log j}\right)^2, $$ where $B_0>0$ is a constant. Thus,
\begin{equation*}
\begin{split}
\pi_2(x) & \ll B_0\left(\frac{\log\log x}{\log x}\right)^2\sum_{j=2}^{x} \left(\frac{\log x(\log\log j)}{(\log\log x)\log j}\right)^2= O\bigg( \frac{x(\log\log x)^2}{\log^2(x)}\bigg).
\end{split}
\end{equation*}

\section{Auxiliary Results}
The below results follow without proof from the results obtained in Sections 3.1--3.3.
\begin{lemma} \label{coo2} For $x \geqslant  3\cdot 10^{120}$, 
\begin{equation*}
\left| \frac{1}{\pi(x)}-\frac{\log x}{\vartheta(x)} \right | < 0.0050222\frac{\log x}{\vartheta(x)}.
\end{equation*}
\end{lemma}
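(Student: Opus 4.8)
The plan is to run the argument from the proof of Lemma \ref{T2} essentially verbatim, changing only the point at which the error function $\varphi$ is evaluated. Recall the identity \eqref{pd}, namely $\left|\frac{1}{\pi(x)}-\frac{\log x}{\vartheta(x)}\right| = \frac{\log x}{\vartheta(x)}\,|H(x)-1|$ with $H(x)=\vartheta(x)/(\pi(x)\log x)$. Hence it suffices to produce an upper bound for $|H(x)-1|$ valid on the whole ray $[3\cdot10^{120},\infty)$ and to check that this bound lies below $0.0050222$.

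First I would recall that Trudgian's inequality $|\vartheta(x)-x|<x\,\epsilon_0(x)$ (valid for $x\geqslant 149$, with $\epsilon_0$ as in \eqref{hol}) together with the Dusart bounds \eqref{d1} on $\pi(x)$, both of which hold comfortably in our range, confine $H(x)$ to the interval between $(1-\epsilon_0(x))/\rho(x)$ and $(1+\epsilon_0(x))/\rho_-(x)$, where $\rho_-(x)=1+1/\log x$ is the lower Dusart factor. As already argued in the proof of Lemma \ref{T2}, the deviation from $1$ is governed by the lower side, so $|H(x)-1|\leqslant \varphi(x)$ with $\varphi(x)=\left|(1-\epsilon_0(x))/\rho(x)-1\right|$; the reason is that in $\varphi$ the contributions $\rho(x)-1$ and $\epsilon_0(x)$ reinforce each other, whereas on the upper side they partially cancel.

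Next I would invoke monotonicity: $\epsilon_0$ is decreasing for $x>5.022$ (since $\epsilon_0'<0$ there) and $\rho$ is manifestly decreasing, so $(1-\epsilon_0(x))/\rho(x)$ increases toward $1$ and $\varphi$ decreases to $0$. Consequently $\sup_{x\geqslant x_0}\varphi(x)=\varphi(x_0)$, and the entire lemma collapses to the single evaluation $\varphi(3\cdot10^{120})$. Writing $\varphi=(\rho-1+\epsilon_0)/\rho$, the dominant terms are $\rho(x)-1\approx 1/\log x$ and $\epsilon_0(x)$; at $x_0=3\cdot10^{120}$ one has $\log x_0\approx 277.4$, giving $\rho(x_0)-1\approx 3.63\cdot10^{-3}$ and $\epsilon_0(x_0)\approx 1.41\cdot10^{-3}$, whose normalized sum is $\approx 0.0050222$, as required.

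The only delicate point is the numerical certification itself: $\varphi(x_0)$ is a difference of quantities close to $1$, so the asserted value must be pinned down to five significant figures. The governing subexpression is $X=\sqrt{\log x_0/6.455}\approx 6.556$ feeding $e^{-X}$ inside $\epsilon_0$, and I would carry enough guard digits through $\log x_0$, $X$, $e^{-X}$, and $\rho(x_0)$ to guarantee the final rounding. I would also verify, rather than merely assert, that the lower-side deviation dominates the upper-side one throughout $[x_0,\infty)$ and not only at the endpoint; this is straightforward here, since both deviations are $O(1/\log x)$ with a stable sign pattern, which legitimizes the clean bound $|H(x)-1|\leqslant\varphi(x)$ for every $x$ in range.
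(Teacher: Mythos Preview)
Your proposal is correct and follows exactly the approach the paper intends: the paper states that Lemma~\ref{coo2} ``follows without proof from the results obtained in Sections 3.1--3.3,'' meaning one simply reruns the proof of Lemma~\ref{T2} and evaluates $\varphi$ at the new threshold $x_0=3\cdot10^{120}$, which you do (and your numerics $\rho(x_0)-1\approx3.63\cdot10^{-3}$, $\epsilon_0(x_0)\approx1.41\cdot10^{-3}$, $\varphi(x_0)\approx0.0050222$ check out).
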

\begin{lemma} \label{coo3} For $x \geqslant  8\cdot 10^{989\ 079}$, 
\begin{equation*}
\left| \frac{1}{\pi(x)}-\frac{\log x}{\vartheta(x)} \right | < 4.39\cdot10^{-7}\frac{\log x}{\vartheta(x)}.
\end{equation*}
\end{lemma}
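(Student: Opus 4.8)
The plan is to recycle the apparatus built in the proof of Lemma \ref{T2} essentially unchanged, since the only quantity that differs between the two statements is the lower threshold on $x$, and hence the numerical value of the bounding constant. First I would recall the exact identity \eqref{pd},
\[
\left| \frac{1}{\pi(x)}-\frac{\log x}{\vartheta(x)} \right| = \frac{\log x}{\vartheta(x)}\,\left| H(x)-1 \right|, \qquad H(x)=\frac{\vartheta(x)}{\pi(x)\log x},
\]
which is valid for every $x$ in range. The task then reduces to bounding $\left|H(x)-1\right|$ from above for all $x \geqslant 8\cdot 10^{989\,079}$. Combining Trudgian's bound \eqref{hol} on $\vartheta(x)$ with Dusart's bounds \eqref{d1} on $\pi(x)$ gives, exactly as in Section 3.1, the estimate $\left|H(x)-1\right| \leqslant \varphi(x)$ with $\varphi(x)=\left|(1-\epsilon_0(x))/\rho(x)-1\right|$.

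The key observation I would make explicit is that $\varphi$ is monotonically decreasing on the relevant range: $\epsilon_0(x)$ decreases for $x>5.022$ (as already noted via $\epsilon_0'(x)<0$), while $\rho(x)=1+1/\log x+2/\log^2(x)+7.59/(\log x)^3$ is plainly decreasing, so $(1-\epsilon_0(x))/\rho(x)$ increases monotonically toward $1$ and $\varphi(x)$ decreases. Consequently it suffices to evaluate $\varphi$ at the left endpoint $x=8\cdot 10^{989\,079}$ to obtain a uniform bound over the whole tail, by exactly the same logic through which Lemma \ref{T2} was read off from $\varphi(599)\approx0.30543$.

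Next I would carry out that single evaluation. With $\log x = \log 8 + 989\,079\log 10 \approx 2.277\cdot 10^{6}$, one has $X=\sqrt{\log x/6.455}\approx 593.984$, so that $\epsilon_0(x)\approx 9.433\,e^{-593.984}$ is utterly negligible; the entire deviation is therefore governed by $\rho(x)-1\approx 1/\log x\approx 4.39\cdot 10^{-7}$, the higher-order terms $2/\log^2(x)$ and $7.59/(\log x)^3$ contributing only at order $10^{-13}$ and below. This yields $\varphi(8\cdot 10^{989\,079})\approx 4.39\cdot 10^{-7}$, which is precisely the figure already recorded in the proof of Theorem \ref{Aa}. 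Feeding this back into \eqref{pd} gives the claimed inequality.

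There is no real analytic obstacle here; the statement is genuinely immediate from Sections 3.1--3.3, which is why it may be listed without proof. The only two points requiring any care are (i) verifying the monotonicity of $\varphi$, so that a single endpoint evaluation legitimately bounds the deviation for every larger $x$, and (ii) confirming that at this enormous scale $\epsilon_0(x)$ is so small that the constant is dictated entirely by $\rho(x)-1\approx 1/\log x$. Both are routine once the framework of Lemma \ref{T2} is in hand.
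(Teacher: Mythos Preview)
Your proposal is correct and matches the paper's approach exactly: the paper states that Lemma \ref{coo3} follows without proof from Sections 3.1--3.3, and you have correctly identified that it is obtained by feeding the value $\varphi(8\cdot 10^{989\,079})\approx 4.39\cdot 10^{-7}$---already computed verbatim in the proof of Theorem \ref{Aa}---into the framework of Lemma \ref{T2}. Your explicit verification of the monotonicity of $\varphi$ is a welcome clarification of what the paper asserts only informally.
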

\begin{corollary}\label{plo1}
For any primorial $p_n\#$, $n>5$,
\begin{equation*}\label{uio12}
\theta_k\left(\frac{0.78482}{n}\right)^k< L_k(p_n; n)<\theta_k\left(\frac{1.46135}{n}\right)^k.
\end{equation*}
\end{corollary}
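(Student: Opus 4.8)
The plan is to read Corollary \ref{plo1} as the effective, fully explicit, all-$n>5$ counterpart of the asymptotic relation $L_k(p_n;n)\sim\theta_k n^{-k}$ established in Theorem \ref{U}. Accordingly I would prove nothing genuinely new, but would feed the sharp prime-counting and Mertens estimates assembled in Sections 3.1--3.3 into the two-sided inequality already produced there. Concretely, I set $x=p_n\#\pm1$, for which $K=1$ and hence $\alpha(x,k;p_n)=1$ exactly, so that $L_k(p_n;n)=\theta_k(\log p_n/\log x)^k$ by \eqref{Er}, and the weak bounds \eqref{int5} (or, after diluting the correction factors, \eqref{int3}) sandwich $L_k(p_n;n)$ between $\theta_k((1+\varphi(x))\,\pi(p_n))^{-k}\lambda_1(x)$ and $\theta_k((1-\varphi(x))\,\pi(p_n))^{-k}\lambda_2(x)$, with $\pi(p_n)=n$.

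I would then separate the range $n>5$ into a tail and a finite initial segment. For the tail ($n$ large, so that $x=p_n\#\pm1$ is astronomically large), the sharpened Mertens bounds \eqref{axl}--\eqref{axl2}, packaged as Lemmata \ref{coo2} and \ref{coo3}, force $\varphi(x)$ and $\epsilon_0(x)$ to be negligible; every correction term carrying a $\log x$ in its denominator then satisfies $\lambda_1(x),\lambda_2(x)\to1$, so that $n\,L_k(p_n;n)^{1/k}/\theta_k^{1/k}\to1$ and the ratio sits comfortably strictly inside the target interval. This is essentially the content of Theorem \ref{Aa} read in the present normalization, and it shows the tail never governs the constants.

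For the finitely many remaining primorials I would estimate $(\log p_n/\vartheta(p_n))^k$ by \eqref{int1} together with Lemma \ref{T2} where it is legal (that is, $p_n\ge599$), and by direct evaluation of $\vartheta(p_n)$ and $\pi(p_n)=n$ where $p_n<599$, combine these with the only surviving corrections, namely the $1/\log^2 p_n$ factors attached to the denominator in \eqref{int5}, and read off the extreme ratios. These initial cases, not the tail, are what pin down the two constants $0.78482$ and $1.46135$; in particular the asymmetry of the constants reflects that they are extreme values over small $n$ rather than the symmetric output of a single $\varphi(x)$.

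The hard part will be uniformity in this small-$n$ regime. It is precisely there that Lemma \ref{T2} can fail (it requires $p_n\ge599$, i.e. $n\ge109$) and that the neglected $1/\log^2 p_n$ corrections are largest, so the clean monotone-decay argument available for the tail cannot by itself cover the initial segment. The delicate point, then, is to verify by direct computation that one single pair of constants simultaneously bounds all of the computed small cases and dominates the analytic tail supplied by Lemmata \ref{coo2} and \ref{coo3}; this is a finite numerical check rather than a deep estimate, but it is the step on which the stated values of the constants stand or fall.
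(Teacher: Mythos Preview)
Your proposal is correct and coincides with the paper's approach: the paper offers no proof at all for Corollary \ref{plo1}, stating only that it and the surrounding auxiliary results ``follow without proof from the results obtained in Sections 3.1--3.3,'' and your plan is precisely to instantiate that machinery---the two-sided bounds \eqref{int5}--\eqref{int1}, Lemma \ref{T2}, and the sharpenings in Lemmata \ref{coo2}--\ref{coo3}---at the relevant numerical thresholds, together with the finite check you correctly flag for the initial range where Lemma \ref{T2} (which needs $p_n\geqslant 599$) is not yet available. One small slip worth correcting: $\alpha(x,k;p_n)=(\vartheta(p_n)/\log(p_n\#\pm1))^k$ is only asymptotically, not identically, equal to $1$ when $K=1$, though this is immaterial to the argument.
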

\begin{corollary}\label{plo2}
For any primorial $p_n\#$, $n>62$,
\begin{equation*}\label{uio14}
\theta_k\left(\frac{ 0.99392}{n}\right)^k< L_k(p_n; n)<\theta_k\left(\frac{1.02089}{n}\right)^k.
\end{equation*}
\end{corollary}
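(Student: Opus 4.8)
The statement is the explicit, constant-tracking counterpart of Theorem~\ref{Aa}, now at the much lower cutoff governed by Lemma~\ref{coo2}, so my plan is to re-run the estimate in the proof of Theorem~\ref{Aa} with the sharper input swapped in. The starting point is the reduction already available from Eq.~\eqref{Er}: for a primorial $x=p_n\#\pm1$ one has $L_k(p_n;n)=\theta_k(\log p_n/\log x)^k$ with $\alpha(x,k;p_n)\to1$, and the difference between $\log x=\log(p_n\#\pm1)$ and $\vartheta(p_n)=\log p_n\#$ is of size $(p_n\#)^{-1}$, hence utterly negligible, so $\alpha$ may be set to $1$. Consequently the whole problem is already packaged in the two-sided bound~\eqref{int3}--\eqref{int4}, whose lower and upper constants are $(1+\varphi(x))^{-1}$ and $(1-\varphi(x))^{-1}$ modified respectively by the correction factors $\lambda_1,\lambda_2$ of Lemma~\ref{intL1}. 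Nothing new has to be derived; it only remains to insert the right numbers.

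First I would fix the threshold. Lemma~\ref{coo2} applies once its argument exceeds $3\cdot10^{120}$, and since $\vartheta(p_{62})=\log(p_{62}\#)$ already sits essentially at $\log(3\cdot10^{120})\approx277.4$, one verifies that $p_n\#>3\cdot10^{120}$ holds exactly for $n>62$. This is the identical bookkeeping used in Theorem~\ref{Aa}, where the heavier cutoff $8\cdot10^{989\,079}$ forced $n>168\,064$; the smaller admissible argument here simply relaxes the cutoff to $n>62$. At this argument Lemma~\ref{coo2} supplies the relative error $0.0050222$, which is the value of $\varphi$ I would feed into~\eqref{int1}.

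I would then assemble the two constants from the $\alpha\sim1$ branch of Lemma~\ref{intL1}. The lower constant comes out as $(1+\varphi(x))^{-1}(1-A_1/\log^2 x)$ and the upper as $(1-\varphi(x))^{-1}(1+A_2/\log^2(\log x))$; with $\log x\approx277$ the first correction is divided by $\log^2 x\approx7.7\cdot10^{4}$ and is therefore almost invisible, whereas the second is divided only by $\log^2(\log x)\approx32$ and is appreciable. This is exactly why the resulting constants $0.99392$ and $1.02089$ are asymmetric about $1$: the lower one is controlled essentially by $(1+0.0050222)^{-1}$, while the upper one is inflated well past $(1-0.0050222)^{-1}$ by the $\log^2(\log x)$ term. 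Evaluating these two expressions at the cutoff yields the claimed window.

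The step I expect to be the main obstacle is not the crude size of the constants but this last calibration: one must pin the admissible values of $A_1$ and $A_2$ at $x$ comparable to the cutoff, confirm that $\varphi$ and both correction factors are monotone in $x$ beyond it (so that a single boundary check propagates to every $n>62$), and confirm that the neglected $\alpha$-correction of order $(p_n\#)^{-1}$ cannot push either constant outside $[0.99392,\,1.02089]$. Establishing monotonicity past the boundary, rather than merely bounding the quantities there, is the only genuinely delicate point.
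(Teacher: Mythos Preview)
Your proposal is correct and matches the paper's intent: the paper states that Corollary~\ref{plo2} (together with the other results in Section~6) follows without proof from the machinery of Sections~3.1--3.3, and your plan---re-running the argument behind Theorem~\ref{Aa} with the input $\varphi\approx0.0050222$ from Lemma~\ref{coo2} at the threshold $p_n\#>3\cdot10^{120}$ (i.e.\ $n>62$) fed into the two-sided bound \eqref{int1}/\eqref{int3}--\eqref{int4}---is exactly that. Your diagnosis of the asymmetry between $0.99392$ and $1.02089$ via the $\log^2(\log x)$ correction in $\lambda_2$, and your flagging of the monotonicity check as the one point requiring care, are both on the mark.
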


\end{document}